\title{\sc On an Extension\\
of Korn's First Inequality\\ 
to Incompatible Tensor Fields\\
on Domains of Arbitrary Dimensions}
\def\shorttitle{On an Extension of Korn's First Inequality to Incompatible Tensor Fields}
\def\pauthor{Patrizio Neff, Dirk Pauly, Karl-Josef Witsch}
\def\mylabelonoff{off}
\def\allowdisbrk{no}
\author{{\sf\pauthor}}
\markboth{\pauthor}{\shorttitle}
\numberwithin{equation}{section}
\newenvironment{acknow}{{\vspace*{1cm}\noindent\bf Acknowledgements }}{}
\newcommand{\bewboxw}{\mbox{}\hfill $\square$ \\}
\newenvironment{proof}{{\noindent\bf Proof }}{\bewboxw}
\newenvironment{proofof}[1]{{\noindent\bf Proof of #1 }}{\bewboxw}
\newcommand{\keywords}[1]{{\noindent\bf Key Words }#1}
\newcommand{\mylabel}[1]{\label{#1}\fbox{{\rm #1}}}}{\newcommand{\mylabel}[1]{\label{#1}\makebox[0mm][]{}}}
\def\rc{\color{red}}
\newcommand{\ds}{\displaystyle}
\newcommand{\ol}{\overline}
\newcommand{\ul}{\underline}
\newcommand{\nz}{\mathbb{N}}
\newcommand{\rz}{\mathbb{R}}
\newcommand{\rzp}{\rz_{+}}
\newcommand{\rt}{\rz^3}
\newcommand{\rN}{\rz^N}
\newcommand{\rNtN}{\rz^{N\times N}}
\DeclareMathOperator{\p}{\partial}
\newcommand{\na}{\nabla}
\DeclareMathOperator{\ed}{d}
\DeclareMathOperator{\cd}{\delta}
\DeclareMathOperator{\grad}{grad}
\DeclareMathOperator{\Grad}{Grad}
\DeclareMathOperator{\curl}{curl}
\DeclareMathOperator{\Curl}{Curl}
\renewcommand{\div}{\operatorname{div}}
\DeclareMathOperator{\Div}{Div}
\newcommand{\trans}[1]{{#1}^\top}
\newcommand{\T}{T}
\newcommand{\TS}{S}
\newcommand{\TR}{R}
\newcommand{\TP}{P}
\newcommand{\Tskew}{S}
\newcommand{\om}{\Omega}
\newcommand{\dom}{\p\!\om}
\newcommand{\ga}{\Gamma}
\newcommand{\gat}{\ga_{t}}
\newcommand{\gan}{\ga_{n}}
\newcommand{\gatj}{\ga_{t,j}}
\DeclareMathOperator{\supp}{supp}
\DeclareMathOperator{\sym}{sym}
\DeclareMathOperator{\dist}{dist}
\DeclareMathOperator{\interior}{int}
\DeclareMathOperator{\tr}{tr}
\renewcommand{\skew}{\operatorname{skew}}
\newcommand{\dvec}[3]{\begin{bmatrix}#1\\#2\\#3\end{bmatrix}}
\DeclareMathOperator{\id}{id}
\newcommand{\dl}{\,d\lambda}
\DeclareMathOperator{\so}{\mathfrak{so}}
\def\set#1#2{\{#1\,:\,#2\}}
\newcommand{\cp}{c_{\mathtt{p}}}
\newcommand{\cpq}{c_{\mathtt{p},q}}
\newcommand{\cpqmo}{c_{\mathtt{p},q-1}}
\newcommand{\cpqpo}{c_{\mathtt{p},q+1}}
\newcommand{\cpz}{c_{\mathtt{p},0}}
\newcommand{\cpo}{c_{\mathtt{p},1}}
\newcommand{\cm}{c_{\mathtt{m}}}
\newcommand{\cpf}[1]{c_{\mathtt{pf},#1}}
\newcommand{\ck}{c_{\mathtt{k}}}
\newcommand{\ckt}{c_{\mathtt{k},t}}
\newcommand{\cks}{c_{\mathtt{k},s}}
\newcommand{\cktj}{c_{\mathtt{k},t,j}}
\newcommand{\cksj}{c_{\mathtt{k},s,j}}
\DeclareMathOperator{\Lebesgue}{\mathsf{L}}
\newcommand{\Lgen}[2]{\Lebesgue^{#1}_{#2}}
\def\Lt{\Lgen{2}{}}
\def\Ltom{\Lt(\om)}
\newcommand{\qLt}[1]{\Lgen{2,#1}{}}
\newcommand{\Ltq}{\qLt{q}}
\newcommand{\Ltqpo}{\qLt{q+1}}
\newcommand{\Ltqmo}{\qLt{q-1}}
\newcommand{\Ltqom}{\Ltq(\om)}
\newcommand{\Ltqpoom}{\Ltqpo(\om)}
\newcommand{\Ltqmoom}{\Ltqmo(\om)}
\DeclareMathOperator{\DSobolev}{\mathsf{D}}
\newcommand{\Dgen}[3]{\overset{#3}{\DSobolev}{}^{#1}_{#2}}
\newcommand{\qD}[1]{\Dgen{#1}{}{}}
\newcommand{\qDc}[1]{\Dgen{#1}{}{\circ}}
\newcommand{\qDcz}[1]{\Dgen{#1}{0}{\circ}}
\newcommand{\Dq}{\qD{q}}
\newcommand{\Dqmo}{\qD{q-1}}
\newcommand{\Dqc}{\qDc{q}}
\newcommand{\Dqmoc}{\qDc{q-1}}
\newcommand{\Dqcz}{\qDcz{q}}
\newcommand{\Dqom}{\Dq(\om)}
\newcommand{\Dqmoom}{\Dqmo(\om)}
\newcommand{\Dqcom}{\Dqc(\om)}
\newcommand{\Dqcgatom}{\Dqc(\gat,\om)}
\newcommand{\Dqczgatom}{\Dqcz(\gat,\om)}
\newcommand{\Dqmocgatom}{\Dqmoc(\gat,\om)}
\newcommand{\Dqczganom}{\Dqcz(\gan,\om)}
\newcommand{\Dqmocganom}{\Dqmoc(\gan,\om)}
\DeclareMathOperator{\DeSobolev}{\Delta}
\newcommand{\Degen}[3]{\overset{#3}{\DeSobolev}{}^{#1}_{#2}}
\newcommand{\qDe}[1]{\Degen{#1}{}{}}
\newcommand{\qDec}[1]{\Degen{#1}{}{\circ}}
\newcommand{\qDez}[1]{\Degen{#1}{0}{}}
\newcommand{\qDecz}[1]{\Degen{#1}{0}{\circ}}
\newcommand{\Deq}{\qDe{q}}
\newcommand{\Deqpo}{\qDe{q+1}}
\newcommand{\Deqc}{\qDec{q}}
\newcommand{\Deqpoc}{\qDec{q+1}}
\newcommand{\Deqz}{\qDez{q}}
\newcommand{\Deqcz}{\qDecz{q}}
\newcommand{\Deqom}{\Deq(\om)}
\newcommand{\Deqpoom}{\Deqpo(\om)}
\newcommand{\Deqzom}{\Deqz(\om)}
\newcommand{\Deqcganom}{\Deqc(\gan,\om)}
\newcommand{\Deqpocganom}{\Deqpoc(\gan,\om)}
\newcommand{\Deqczganom}{\Deqcz(\gan,\om)}
\newcommand{\Deqczgatom}{\Deqcz(\gat,\om)}
\newcommand{\Deqpocgatom}{\Deqpoc(\gat,\om)}
\DeclareMathOperator{\Sobolev}{\mathsf{H}}
\newcommand{\Hgen}[3]{\overset{#3}{\Sobolev}{}^{#1}_{#2}}
\def\Ho{\Hgen{1}{}{}}
\def\Hk{\Hgen{k}{}{}}
\def\Hoom{\Ho(\om)}
\def\Hkom{\Hk(\om)}
\def\Hoot{\Hgen{1/2}{}{}}
\def\Hootom{\Hoot(\om)}
\def\Hoc{\Hgen{1}{}{\circ}}
\def\Hocom{\Hoc(\om)}
\def\Hocgatom{\Hoc(\gat;\om)}
\DeclareMathOperator{\Cont}{\mathsf{C}}
\newcommand{\Cgen}[2]{\overset{#2}{\Cont}{}^{#1}}
\def\Cz{\Cgen{0}{}}
\def\Czomb{\Cz(\ol{\om})}
\def\Ci{\Cgen{\infty}{}}
\def\Cic{\Cgen{\infty}{\circ}}
\def\Ciq{\Cgen{\infty,q}{}}
\def\Ciqc{\Cgen{\infty,q}{\circ}}
\def\Ciom{\Ci(\om)}
\def\Cicom{\Cic(\om)}
\def\Ciqcom{\Ciqc(\om)}
\def\Ciqcgatom{\Ciqc(\gat,\om)}
\DeclareMathOperator{\dirichlet}{\mathcal{H}}
\newcommand{\qharmdi}[2]{\dirichlet^{#1}_{#2}(\om)}
\newcommand{\harmdi}{\qharmdi{}{}}
\newcommand{\harmdiz}{\qharmdi{0}{}}
\newcommand{\harmdio}{\qharmdi{1}{}}
\newcommand{\harmdiq}{\qharmdi{q}{}}
\newcommand{\Hggen}[3]{\overset{#2}{\Sobolev}(\grad;#3)}
\newcommand{\HGgen}[3]{\overset{#2}{\Sobolev}(\Grad;#3)}
\newcommand{\Hcgen}[3]{\overset{#2}{\Sobolev}(\curl_{#1};#3)}
\newcommand{\HCgen}[3]{\overset{#2}{\Sobolev}(\Curl_{#1};#3)}
\newcommand{\Hdgen}[3]{\overset{#2}{\Sobolev}(\div_{#1};#3)}
\newcommand{\HDgen}[3]{\overset{#2}{\Sobolev}(\Div_{#1};#3)}
\newcommand{\Hgom}{\Hggen{}{}{\om}}
\newcommand{\HGom}{\HGgen{}{}{\om}}
\newcommand{\Hcom}{\Hcgen{}{}{\om}}
\newcommand{\HCom}{\HCgen{}{}{\om}}
\newcommand{\Hdom}{\Hdgen{}{}{\om}}
\newcommand{\HDom}{\HDgen{}{}{\om}}
\newcommand{\Hgcom}{\Hggen{}{\circ}{\om}}
\newcommand{\Hccom}{\Hcgen{}{\circ}{\om}}
\newcommand{\Hdcom}{\Hdgen{}{\circ}{\om}}
\newcommand{\HCzom}{\HCgen{0}{}{\om}}
\newcommand{\Hdzom}{\Hdgen{0}{}{\om}}
\newcommand{\HDzom}{\HDgen{0}{}{\om}}
\newcommand{\HDczom}{\HDgen{0}{\circ}{\om}}
\newcommand{\Hgcgatom}{\Hggen{}{\circ}{\gat,\om}}
\newcommand{\Hgcganom}{\Hggen{}{\circ}{\gan,\om}}
\newcommand{\HGcgatom}{\HGgen{}{\circ}{\gat,\om}}
\newcommand{\Hccgatom}{\Hcgen{}{\circ}{\gat,\om}}
\newcommand{\HCcgatom}{\HCgen{}{\circ}{\gat,\om}}
\newcommand{\Hdcgatom}{\Hdgen{}{\circ}{\gat,\om}}
\newcommand{\Hdcganom}{\Hdgen{}{\circ}{\gan,\om}}
\newcommand{\HDcganom}{\HDgen{}{\circ}{\gan,\om}}
\newcommand{\Hcczgatom}{\Hcgen{0}{\circ}{\gat,\om}}
\newcommand{\HCczgatom}{\HCgen{0}{\circ}{\gat,\om}}
\newcommand{\Hdczganom}{\Hdgen{0}{\circ}{\gan,\om}}
\newcommand{\HDczganom}{\HDgen{0}{\circ}{\gan,\om}}
\newcommand{\Hccganom}{\Hcgen{}{\circ}{\gan,\om}}
\newcommand{\HCcganom}{\HCgen{}{\circ}{\gan,\om}}
\DeclareMathOperator{\XSobolev}{\mathsf{X}}
\DeclareMathOperator{\YSobolev}{\mathsf{Y}}
\newcommand{\xqmoom}{\XSobolev^{q-1}(\om)}
\newcommand{\yqpoom}{\YSobolev^{q+1}(\om)}
\newcommand{\normdst}{\hspace{-0.4ex}}
\newcommand{\scp}[2]{\left\langle#1,#2\right\rangle}
\newcommand{\scps}[2]{\langle#1,#2\rangle}
\newcommand{\scpLtom}[2]{\scp{#1}{#2}_{\Ltom}}
\newcommand{\scpLtqom}[2]{\scp{#1}{#2}_{\Ltqom}}
\newcommand{\norm}[1]{\left|\normdst\left|#1\right|\normdst\right|}
\newcommand{\normLtom}[1]{\norm{#1}_{\Ltom}}
\newcommand{\normLtqom}[1]{\norm{#1}_{\Ltqom}}
\newcommand{\normLtqpoom}[1]{\norm{#1}_{\Ltqpoom}}
\newcommand{\normLtqmoom}[1]{\norm{#1}_{\Ltqmoom}}
\newtheorem{lem}{Lemma}
\newtheorem{defi}[lem]{Definition}
\newtheorem{theo}[lem]{Theorem}
\newtheorem{cor}[lem]{Corollary}
\newtheorem{rem}[lem]{Remark}
\renewcommand{\Tskew}{A}
\newcommand{\Th}{\hat{\T}}
\newcommand{\TPh}{\hat{\TP}}
\newcommand{\MCP}{{\sf MCP}}
\newcommand{\MAP}{{\sf MAP}}
\newcommand{\KP}{{\sf KP}}
\newcommand{\pair}{(\om,\gat)}
\newcommand{\pairgan}{(\om,\gan)}
\newcommand{\calE}{\mathcal{E}}
\newcommand{\calEt}{\tilde{\mathcal{E}}}
\newcommand{\calS}{\mathcal{S}}
\newcommand{\cone}{c_{1}}
\newcommand{\ctwo}{c_{2}}
\newcommand{\soN}{\so(N)}
\begin{document}

\maketitle{}


\thispagestyle{empty}

\begin{abstract}
For a bounded domain $\om$ in $\rN$ with Lipschitz boundary $\ga=\dom$
and a relatively open and non-empty `admissible' subset $\gat$ of $\ga$,
we prove the existence of a positive constant $c$ such that inequality
\begin{align}
\mylabel{kornineqabstract}
c\norm{\T}_{\Lt(\om,\rNtN)}
\leq\norm{\sym\T}_{\Lt(\om,\rNtN)}
+\norm{\Curl\T}_{\Lt(\om,\rz^{N\times N(N-1)/2})}
\end{align}
holds for all tensor fields $\T\in\HCgen{}{\circ}{\gat,\om,\rNtN}$,
this is, for all square-integrable tensor fields $\T:\om\to\rNtN$
having a row-wise square-integrable rotation tensor field
$\Curl\T:\om\to\rz^{N\times N(N-1)/2}$
and vanishing row-wise tangential trace on $\gat$. 

For compatible tensor fields $\T=\na v$ with $v\in\Ho(\om,\rN)$
having vanishing tangential Neumann trace on $\gat$
the inequality \eqref{kornineqabstract} reduces 
to a non-standard variant of Korn's first inequality
since $\Curl\T=0$, while for skew-symmetric tensor fields $\T$
Poincar\'e's inequality is recovered.

If $\gat=\emptyset$, our estimate \eqref{kornineqabstract} 
still holds at least for simply connected $\om$
and for all tensor fields $\T\in\HCgen{}{}{\om,\rNtN}$
which are $\Lgen{2}{}(\om,\rNtN)$-perpendicular to $\soN$, i.e.,
to all skew-symmetric constant tensors.\\
\keywords{Korn's inequality, Poincar\'e's inequality,
Maxwell's equations, Helmholtz' decomposition, 
gradient plasticity, incompatible tensor fields,
differential forms, mixed boundary conditions}
\end{abstract}

\tableofcontents

\section{Introduction and Main Results}
\mylabel{secmainresults}

We extend the Korn-type inequalities from 
\cite{neffpaulywitschgenkornrtsli}
presented earlier in less general settings in
\cite{neffpaulywitschgenkornpamm,neffpaulywitschgenkornrt,
neffpaulywitschgenkornrtzap,neffpaulywitschgenkornrn}
to the $N$-dimensional case.
For this, let $N\in\nz$ and $\om$ be a bounded domain in $\rN$ 
as well as $\gat$ be an open subset of its boundary $\ga:=\dom$.
Our main result reads:

\begin{theo}[Main Theorem]
\mylabel{maintheo}
Let the pair $\pair$ be admissible\footnote{The 
precise meaning of `admissible' will be given 
in Definition \ref{domdef}.}.
There exist constants $0<\cone\leq\ctwo$ such that the following estimates hold: 
\begin{itemize}
\item[\bf(i)] 
If $\gat\neq\emptyset$, then the inequality
$$\normLtom{\T}\leq\cone\big(\normLtom{\sym\T}+\normLtom{\Curl\T}\big)$$
holds for all tensor fields $\T\in\HCcgatom$. 
In other words, on $\HCcgatom$ the right hand side 
defines a norm equivalent to the standard norm in $\HCom$.
\item[\bf(ii)] 
If $\gat=\emptyset$, then for all tensor fields $\T\in\HCom$\footnote{If 
$\gat=\emptyset$, then $\HCcgatom=\HCom$.} 
there exists a piece-wise constant skew-symmetric tensor field $\Tskew$, such that
$$\normLtom{\T-\Tskew}\leq\ctwo\big(\normLtom{\sym\T}+\normLtom{\Curl\T}\big).$$
Note that in general $\Tskew\notin\HCom$.
\item[\bf(ii')] 
If $\gat=\emptyset$ and $\om$ is additionally simply connected, 
then for all tensor fields $\T$ in $\HCom$ 
there exists a uniquely determined constant 
skew-symmetric tensor field $\Tskew=\Tskew_{\T}\in\soN$\footnote{$\soN$ 
denotes the set of all constant skew-symmetric tensors, i.e., 
$(N\times N)$-matrices.}, such that
$$\normLtom{\T-\Tskew_{\T}}\leq\cone\big(\normLtom{\sym\T}+\normLtom{\Curl\T}\big).$$
Since $\Tskew_{\T}\in\HCzom$ one can easily estimate 
$\norm{\T-\Tskew_{\T}}_{\HCom}$ as well.
Moreover, $\T-\Tskew_{\T}\in\HCom\cap\soN^{\bot}$ and
$\Tskew_{\T}=0$ if and only if $\T\bot\soN$. 
Thus, the inequality in (i) holds for all $\T\in\HCom\cap\soN^{\bot}$ as well.
Therefore, also on $\HCom\cap\soN^{\bot}$ the right hand side 
defines a norm equivalent to the standard norm in $\HCom$.
\end{itemize}
\end{theo}

\begin{rem}
\mylabel{maintheorem}
\begin{itemize}
\item[\bf(i)] 
Here, the differential operator $\Curl$ 
denotes the row-wise application
of the standard $\curl$ in $\rN$ and a tensor field $\T$
belongs to the Hilbert Sobolev-type space $\HCcgatom$
if $\T$ and its distributional $\Curl\T$ belong both 
to the standard Lebesgue spaces $\Ltom$
and the row-wise weak tangential trace of $\T$ 
vanishes at the boundary part $\gat$.
Exact definitions of all spaces and operators used
will be given in section \ref{secdef}.
\item[\bf(ii)] 
In (ii') the special constant skew-symmetric tensor field $\Tskew_{\T}$
is given explicitly by $\Tskew_{\T}=\pi_{\soN}\T\in\soN$,
where $\pi_{\soN}:\Ltom\to\soN$ denotes the $\Ltom$-orthogonal projection onto $\soN$
and can be represented by
\begin{align*}
\pi_{\soN}\T&=\skew\oint_{\om}\T\dl\in\soN.
\intertext{Furthermore, $\Tskew_{\T}$ can also be computed by}
\Tskew_{\T}=\Tskew_{\TR}:=\pi_{\soN}\TR&=\skew\oint_{\om}\TR\dl\in\soN,
\end{align*}
where $\TR$ denotes the Helmholtz projection of $\T$ 
onto $\HCzom$ according to Corollary \ref{helmdecoten}.
\item[\bf(iii)] 
The constants $\cone$ and $\ctwo$ are given by
\eqref{explicitconst} and \eqref{explicitconsttwo}
and these depend in a simply algebraic way only on the constants $\ck,\cm$
in Korn's first and the Maxwell inequality.
\end{itemize}
\end{rem}

For the proof of Theorem \ref{maintheo}
we follow in close lines the proofs from \cite{neffpaulywitschgenkornrtsli}.
Therefore, again we need to combine three crucial tools, namely
\begin{itemize}
\item a Maxwell estimate, Corollary \ref{poincaremaxten};
\item a Helmholtz decomposition, Corollary \ref{helmdecoten};
\item a generalized version of Korn's first inequality, Lemma \ref{genkornlem}.
\end{itemize}
Our assumptions on the domain $\om$ and the part of the boundary $\gat$, i.e.,
on the pair $\pair$, are precisely made for this three major tools to hold.
We will present these assumptions in section \ref{secdef}
and a pair $\pair$ satisfying those will be called admissible.

Theorem \ref{maintheo} can be looked at as a common 
generalization and formulation of two well known and very important 
classical inequalities, namely  {\bf Korn's first} and {\bf Poincar\'e's inequality}.
This is, taking irrotational tensor fields $\T$, i.e.,
$\Curl\T=0$, then a non-standard version of Korn's first inequality
\begin{align*}
c\normLtom{\T-\Tskew_{\T}}&\leq\normLtom{\sym\T}
\intertext{holds for all $\T\in\HCczgatom$, 
where $\Tskew_{\T}=0$ if $\gat\neq\emptyset$.
Another, less general choice, is $\T=\na v$ yielding}
c\normLtom{\na v-\Tskew_{\na v}}&\leq\normLtom{\sym\na v}
\end{align*}
with some vector field $v$ belonging to $\Hocgatom$
or just to $\Hoom$ with $\na v_{n}$, $n=1,\ldots,N$,
normal at $\gat$. Note that
$$\na\Hocgatom,\,\na\set{v\in\Hoom}{\na v_{n}\text{ normal at }\gat\,\forall\,n=1,\ldots,N}
\subset\HCczgatom.$$
On the other hand, taking a skew-symmetric tensor field $\T$, i.e.,
$\sym\T=0$, then Poincar\'e's inequality in disguise
\begin{align*}
c\normLtom{\T-\Tskew_{\T}}&\leq\normLtom{\Curl\T}
\intertext{appears, where again $\Tskew_{\T}=0$ if $\gat\neq\emptyset$. 
We note that since $\T$ can be identified with 
a vector field $v$ and the $\Curl$ 
is as good as the gradient $\na$ on $v$ we have}
c\normLtom{v-c_{v}}&\leq\normLtom{\na v}
\end{align*}
with $c_{v}\in\rN$ and $c_{v}=0$ if $\gat\neq\emptyset$. 
These connections between Korn's first and Poincar\'e's inequalities
and also to the Maxwell inequalities 
and the more general Poincar\'e-type inequalities
are illustrated in Figure \ref{diagram}.

\begin{figure}
\begin{tikzpicture}
\node[rectangle,draw,line width=1pt,rounded corners=7pt,shading=axis] (m) at (0,2.5)
{\begin{tabular}{c}
1. Maxwell\\
$|v|\leq\cm(|\na\times v|+|\na\cdot v|)$
\end{tabular}};
\node[rectangle,draw,line width=1pt,rounded corners=7pt,shading=axis] (p) at (4.4,2.5)
{\begin{tabular}{c}
2. Poincar\'e\\
$|u|\leq\cp|\na u|$
\end{tabular}};
\node[rectangle,draw,line width=1pt,rounded corners=7pt,shading=axis] (k) at (8.2,2.5)
{\begin{tabular}{c}
3. First Korn\\
$|\na v|\leq\ck|\sym\na v|$
\end{tabular}};
\node[rectangle,draw,line width=1pt,rounded corners=0pt,shading=axis] (gp) at (0.5,0)
{\begin{tabular}{c}
I. Poincar\'e-type\\
$|E|\leq\cpq(|\ed E|+|\cd E|)$
\end{tabular}};
\node[rectangle,draw,line width=1pt,rounded corners=0pt,shading=axis] (o) at (6.7,0)
{\begin{tabular}{c}
II. our new inequality\\
$|\T|\leq\cone(|\sym\T|+|\Curl\T|)$
\end{tabular}};
\path[->] 
(gp) edge [bend left=20] node [left] 
{\tiny\begin{tabular}{r}
$q=1$, $N=3$\\
$E\cong v$ vector field\\
$\ed=\curl=\na\times$, $\cd=\div=\na\cdot$
\end{tabular}} (m)
edge [bend right=20] node [left] 
{\tiny\begin{tabular}{r}
$q=0$\\
$E\cong u$ function\\
$\ed=\na$, $\cd=0$ \hspace*{2mm}
\end{tabular}} (p)
(o) edge [bend left=30] node [right] 
{\tiny\begin{tabular}{c}
$\T\in\soN$\\
($\T$ skew)
\end{tabular}} (p)
edge [bend right=20] node [right] 
{\tiny\begin{tabular}{c}
$\T=\na v=J_{v}$\\
($\T$ compatible)
\end{tabular}} (k);
\end{tikzpicture}
\caption{The three fundamental inequalities are implied by two. 
For the constants we have $\cp=\cpz$, $\cm=\cpo$ and $\ck,\cp\leq\cone$.}
\mylabel{diagram}
\end{figure}

\section{Definitions and Preliminaries}
\mylabel{secdef}

As mentioned before,
let generally $N\in\nz$ and $\om$ be a bounded domain in $\rN$ 
as well as $\gat$ be an open subset of the boundary $\ga=\dom$.
We will use the notations from our earlier papers
\cite{neffpaulywitschgenkornrn}
and
\cite{neffpaulywitschgenkornpamm,neffpaulywitschgenkornrt,
neffpaulywitschgenkornrtzap,neffpaulywitschgenkornrtsli}.

\subsection{Differential Forms}

In particular, we denote the Lebesgue spaces
of square-integrable $q$-forms\footnote{alternating 
differential forms of rank $q\in\{0,\dots,N\}$}
by $\Ltqom$.
Moreover, we have the standard Sobolev-type spaces 
for the exterior derivative $\ed$ and co-derivative 
$\cd:=(-1)^{(q-1)N}*\ed*$ (acting on $q$-forms)
\begin{align*}
\Dqom&:=\set{E\in\Ltqom}{\ed E\in\Ltqpoom},\\
\Deqom&:=\set{H\in\Ltqom}{\cd H\in\Ltqmoom},
\intertext{where as usual $*$ denotes Hodge's star isomorphism.
$\Ciqcom$ is the space of smooth and compactly supported 
$q$-forms on $\om$, often called test space.
Due to the more complex geometry and topology of the domain $\om$
and its boundary parts $\ga,\gat$
we need some more test spaces}
\Ciqcgatom&:=\set{E\in\Ciq(\ol{\om})}{\dist(\supp E,\gat)>0},\\
\Ciq(\ol{\om})&:=\set{E|_{\om}}{E\in\Ciqc(\rN)}.
\intertext{Then, we define}
\Dqcgatom&:=\ol{\Ciqcgatom}
\end{align*}
taking the closure in $\Dqom$ 
and note that a $q$-form in $\Dqcgatom$ has
generalized vanishing tangential trace\footnote{This 
can be seen easily by Stokes' theorem.} on $\gat$.
If $\gat=\ga$ we can identify $\Ciqcgatom$ with $\Ciqcom$ and write
\begin{align*}
\Dqcgatom&\,=\ol{\Ciqcgatom}=\ol{\Ciqcom}=:\Dqcom.
\intertext{If $\gat=\emptyset$ we have $\Ciqcgatom=\Ciq(\ol{\om})$ and thus}
\Dqcgatom&\,=\ol{\Ciqcgatom}=\ol{\Ciq(\ol{\om})}\subset\Dqom.
\end{align*}
Equality in the last relation means 
the density result $\ol{\Ciq(\ol{\om})}=\Dqom$,
which holds, e.g., if $\om$ has 
the segment property\footnote{See, e.g., 
\cite{agmonbook,wlokabook,kuhndiss}.}.
The latter is valid, e.g., for domains with Lipschitz boundary. 
An index $0$ at the lower right corner
indicates vanishing derivatives, e.g.,
\begin{align*}
\Dqczgatom&:=\set{E\in\Dqcgatom}{\ed E=0}.
\end{align*}
Analogously, we introduce the corresponding Sobolev-type spaces 
for the co-derivative $\cd$ which are usually assigned to the
boundary complement $\gan:=\ga\setminus\ol{\gat}$ of $\gat$.
We have, e.g.,
$$\Deqzom=\set{H\in\Deqom}{\cd H=0},\quad
\Deqcganom,\quad\Deqczganom,$$
where in the latter spaces 
a vanishing normal trace on $\gan$ is generalized.
Moreover, we define the spaces of so called 
`harmonic Dirichlet-Neumann forms'
\begin{align}
\mylabel{dirichletdef}
\harmdiq:=\Dqczgatom\cap\Deqczganom.
\end{align}
We note that in classical terms a harmonic Dirichlet-Neumann 
$q$-form $E$ satisfies
$$\ed E=0,\quad\cd E=0,\quad
\iota^{*}E|_{\gat}=0,\quad\iota^{*}*E|_{\gan}=0,$$
where $\iota^{*}$ denotes the pullback 
of the canonical embedding $\iota:\ga\hookrightarrow\ol{\om}$
and the restrictions to $\gat$ and $\gan$ should be understood as pullbacks as well.
Equipped with their natural graph norms all these spaces are Hilbert spaces.

Now, we can begin to introduce our regularity assumptions on the boundary $\ga$
and the interface $\gamma:=\ol{\gat}\cap\ol{\gan}$. We start with the following:

\begin{defi}
\mylabel{defmaxcomp}
The pair $\pair$ has the `Maxwell compactness property' {\sf(MCP)}, 
if for all $q$ the embeddings
$$\Dqcgatom\cap\Deqcganom\hookrightarrow\Ltom$$
are compact.
\end{defi}

\begin{rem}
\mylabel{remmaxcomp}
\begin{itemize}
\item[\bf(i)] 
There exists a substantial amount of literature and different proofs for the \MCP.
See for example the papers and books of
Costabel, Kuhn, Leis, Pauly, Picard, Saranen, Weber, Weck, Witsch
\cite{costabelremmaxlip,kuhnpaulyregmax,leistheoem,leistheomaxgmd,leisbook,paulytimeharm,paulystatic,
paulyasym,paulydeco,picardpotential,picardboundaryelectro,picardcomimb,picardlowfreqmax,picarddeco,
picardweckwitschxmas,saranenmaxkegel,saranenmaxnichtglatt,webercompmax,weckmax,witschremmax}. 
All these papers are concerned with the special cases $\gat=\ga$ resp. $\gat=\emptyset$.
For the case $N=3$, $q=1$, i.e., $\om\subset\rt$, we refer to
\cite{costabelremmaxlip,leistheoem,leistheomaxgmd,leisbook,picardboundaryelectro,picardlowfreqmax,
picardweckwitschxmas,saranenmaxkegel,saranenmaxnichtglatt,webercompmax,witschremmax},
whereas for the general case, i.e., $\om\subset\rN$ 
or even $\om$ a Riemannian manifold, we correspond to
\cite{kuhnpaulyregmax,paulytimeharm,paulystatic,paulyasym,paulydeco,picardpotential,
picardcomimb,picarddeco,weckmax}. We note that even weaker regularity of $\ga$ than Lipschitz 
is sufficient for the {\MCP} to hold.
The first proof of the {\MCP} for non-smooth domains and
even for smooth Riemannian manifolds with non-smooth boundaries (cone property)
was given in 1974 by Weck in \cite{weckmax}.
To the best of our knowledge,
the strongest result so far can be found in
the paper of Picard, Weck and Witsch \cite{picardweckwitschxmas}.
See also our discussion in \cite{neffpaulywitschgenkornrtsli}.
An interesting proof has been given by Costabel in \cite{costabelremmaxlip}.
He made the detour of showing more fractional Sobolev regularity for the vector fields.
More precisely, he was able to prove
that for Lipschitz domains $\om\subset\rt$ and $q=1$ the embedding
\begin{align}
\mylabel{contembhoh}
\Dqcom\cap\Deqom&\hookrightarrow\Hootom
\intertext{is continuous. Then, for all $0\leq k<1/2$ the embeddings}
\Dqcom\cap\Deqom&\hookrightarrow\Hkom\nonumber
\end{align}
are compact, especially for $k=0$, where $\Hkom=\Ltom$ holds.
\item[\bf(ii)] 
For the general case $\emptyset\subset\gat\subset\ga$
with possibly $\emptyset\subsetneq\gat\subsetneq\ga$,
Jochmann gave a proof for the {\MCP} in \cite{jochmanncompembmaxmixbc},
where he considered the special case of a bounded domain $\om\subset\rt$.
He can admit $\om$ to be Lipschitz and $\gamma$ to be a Lipschitz interface.
Generalizing the ideas of Weck in \cite{weckmax},
Kuhn showed in his dissertation \cite{kuhndiss} that the {\MCP} holds for
domains $\om\subset\rN$ or even for smooth Riemannian manifolds $\om$
with `smooth' boundary and admissible interface $\gamma$. 
See also our discussion in \cite{neffpaulywitschgenkornrtsli}.
A result, which meets our needs, has been proved quite recently 
by M. Mitrea and his collaborators.
More precisely, we will use results by Gol'dshtein and Mitrea (I. \& M.) from
\cite{goldshteinmitreairinamariushodgedecomixedbc}.
In the language of this paper
we assume $\om$ to be a weakly Lipschitz domain,
this is, $\om$ is a Lipschitz manifold with boundary, 
see \cite[Definition 3.6]{goldshteinmitreairinamariushodgedecomixedbc},
and $\gat\subset\ga$ to be an admissible patch
(yielding $\gamma$ to be an admissible path), i.e.,
$\gat$ is a Lipschitz submanifold with boundary, 
see \cite[Definition 3.7]{goldshteinmitreairinamariushodgedecomixedbc}.
Roughly speaking, $\om$ and $\gat$ are defined by Lipschitz functions.
Here, the main point in proving the {\MCP}, i.e.,
\cite[Proposition 4.4, (4.21)]{goldshteinmitreairinamariushodgedecomixedbc},
is that then $\om$ is locally Lipschitz diffeomorphic to
a `creased domain' in $\rN$, first introduced by Brown
in \cite{brownmixedproblaplacelipschitz}.
See \cite[Section 3.6]{goldshteinmitreairinamariushodgedecomixedbc} 
for more details and to find the statement
`Informally speaking, the pieces in which the boundary is partitioned
are admissible patches which meet at an angle $<\pi$.
In particular, creased domains are inherently non-smooth'.
Whereas in \cite{goldshteinmitreairinamariushodgedecomixedbc} 
everything is defined in the more general framework of manifolds,
in \cite{jakabmitreairinamariusfinensolhodgedeco}
the {\MCP} is proved 
by Jakab and Mitrea (I. \& M.)
for creased domains $\om\subset\rN$.
By the Lipschitz diffeomorphisms, the {\MCP} holds then
for general manifolds/domains $\om$ as well.
In \cite{jakabmitreairinamariusfinensolhodgedeco} the authors
follow and generalize the idea \eqref{contembhoh} 
of Costabel from \cite{costabelremmaxlip}.
Particularly, in \cite[(1.2), Theorem 1.1, (1.9)]{jakabmitreairinamariusfinensolhodgedeco}
the following regularity result has been proved:
For all $q$ the embeddings
\begin{align*}
\Dqcgatom\cap\Deqcganom&\hookrightarrow\Hootom
\intertext{are continuous. Therefore, as before,
for all $q$ and for all $0\leq k<1/2$ the embeddings}
\Dqcgatom\cap\Deqcganom&\hookrightarrow\Hkom
\end{align*}
are compact, giving the {\MCP} for $k=0$.
\end{itemize}
\end{rem}

By \cite[Proposition 4.4, (4.21)]{goldshteinmitreairinamariushodgedecomixedbc}
and the latter remark we have:

\begin{theo}
\mylabel{MCPtheo}
Let $\om$ be a weakly Lipschitz domain and $\gat$ be an admissible patch,
i.e., let $\om$ be a (weakly) Lipschitz domain and $\gat$ be an Lipschitz patch of $\ga$.
Then the pair $\pair$ has the \MCP. 
\end{theo}

\begin{cor}
\mylabel{harmdifindim}
Let the pair $\pair$ have the \MCP. 
Then, for all $q$ the spaces 
of harmonic Dirichlet-Neumann forms $\harmdiq$ are finite dimensional.
\end{cor}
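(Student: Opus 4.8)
The plan is to deduce finite dimensionality from the \MCP\ by the classical soft argument that a normed space whose closed unit ball is compact must be finite dimensional. The point is simply that, on the space of harmonic Dirichlet--Neumann forms, the relevant graph norm degenerates to the plain $\Ltom$-norm, so the compact embedding furnished by the \MCP\ becomes a compact identity map.

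First I would fix $q$ and introduce the Hilbert space $X:=\Dqcgatom\cap\Deqcganom$, equipped with its natural graph norm $\norm{E}_{X}^{2}:=\normLtom{E}^{2}+\normLtom{\ed E}^{2}+\normLtom{\cd E}^{2}$; this is complete because $\ed$ and $\cd$ are closed operators. Since both $\ed$ and $\cd$ are closed, their joint kernel $\harmdiq=\Dqczgatom\cap\Deqczganom=\set{E\in X}{\ed E=0\text{ and }\cd E=0}$ is a \emph{closed} subspace of $X$, hence itself a Hilbert space; and on $\harmdiq$ the graph norm $\norm{\,\cdot\,}_{X}$ collapses to the plain $\Ltom$-norm.

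Next I would invoke the \MCP: the embedding $X\hookrightarrow\Ltom$ is compact, hence so is its restriction to the closed subspace $\harmdiq$. Because on $\harmdiq$ the $X$-norm and the $\Ltom$-norm coincide, this restriction is, up to the isometric identification, the identity map of $\harmdiq$. To make this rigorous I would check that $\harmdiq$ is closed in $\Ltom$: if $(E_{n})\subset\harmdiq$ converges in $\Ltom$, then from $\ed E_{n}=0$ and $\cd E_{n}=0$ the sequence is already Cauchy in $X$, so its $\Ltom$-limit lies in the $X$-closed set $\harmdiq$. Consequently the closed unit ball of $\harmdiq$ is compact in $\harmdiq$, and by the Riesz lemma (a normed space with compact closed unit ball is finite dimensional) we conclude $\dim\harmdiq<\infty$ for every $q$.

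There is no genuine obstacle; the argument is entirely functional-analytic. The only step needing a little care is the bookkeeping in the previous paragraph, namely recognizing that compactness of the embedding $X\hookrightarrow\Ltom$ transfers to compactness of the \emph{identity} on the closed subspace $\harmdiq$ precisely because the graph norm degenerates there. Once that identification is in place, finite dimensionality is immediate.
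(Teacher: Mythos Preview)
Your argument is correct and is exactly the paper's approach, just written out in full detail: the paper's proof is the single sentence ``The {\MCP} implies immediately that the unit ball in $\harmdiq$ is compact,'' which is precisely your observation that on $\harmdiq$ the graph norm degenerates to the $\Ltom$-norm so that the compact embedding becomes a compact identity, whence Riesz' lemma applies.
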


\begin{proof}
The {\MCP} implies immediately that
the unit ball in $\harmdiq$ is compact.
\end{proof}

For details about the 
particular dimensions see 
\cite{picardboundaryelectro} or 
\cite{goldshteinmitreairinamariushodgedecomixedbc}.
We note that the dimensions of $\harmdiq$
depend only on topological properties of the pair $\pair$.

\begin{lem}
\mylabel{poincarediff}
{\sf(Poincar\'e-type Estimate for Differential Forms)}
Let the pair $\pair$ have the \MCP. 
Then, for all $q$ there exist positive constants $\cpq$, 
such that
\begin{align*}
\normLtqom{E}
&\leq\cpq\big(\normLtqpoom{\ed E}^2+\normLtqmoom{\cd E}^2\big)^{1/2}
\intertext{holds for all $E\in\Dqcgatom\cap\Deqcganom\cap\harmdiq^{\bot}$. Moreover,}
\normLtqom{(\id-\pi_{q})E}
&\leq\cpq\big(\normLtqpoom{\ed E}^2+\normLtqmoom{\cd E}^2\big)^{1/2}
\end{align*}
holds for all $E\in\Dqcgatom\cap\Deqcganom$,
where $\pi_{q}:\Ltqom\to\harmdiq$ denotes the $\Ltqom$-orthogonal projection
onto the Dirichlet-Neumann forms $\harmdiq$.
\end{lem}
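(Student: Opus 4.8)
The plan is to follow the standard indirect compactness argument for Maxwell/Poincar\'e estimates, exactly as in \cite{neffpaulywitschgenkornrtsli}, based on the {\MCP} (Definition \ref{defmaxcomp}), which is available here by Theorem \ref{MCPtheo}; the projected ``moreover'' inequality will then follow from a one-line algebraic reduction.

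First I would reduce the second estimate to the first. For $E\in\Dqcgatom\cap\Deqcganom$ the harmonic part $\pi_{q}E$ lies in $\harmdiq=\Dqczgatom\cap\Deqczganom$, so $\ed\pi_{q}E=0$ and $\cd\pi_{q}E=0$; hence $(\id-\pi_{q})E$ belongs to $\Dqcgatom\cap\Deqcganom\cap\harmdiq^{\bot}$ and satisfies $\ed(\id-\pi_{q})E=\ed E$ as well as $\cd(\id-\pi_{q})E=\cd E$. Applying the first inequality to $(\id-\pi_{q})E$ then yields the second. (Here $\harmdiq$ is a closed subspace of $\Ltqom$, indeed finite dimensional by Corollary \ref{harmdifindim}, so $\pi_{q}$ is a well-defined bounded orthogonal projection.)

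For the first inequality I would argue by contradiction. If it fails, there is a sequence $(E_{n})$ in $X:=\Dqcgatom\cap\Deqcganom\cap\harmdiq^{\bot}$ with $\normLtqom{E_{n}}=1$ but $\normLtqpoom{\ed E_{n}}^{2}+\normLtqmoom{\cd E_{n}}^{2}\to0$. Then $(E_{n})$ is bounded in the graph norm of $\Dqcgatom\cap\Deqcganom$, so by the {\MCP} a subsequence (not relabeled) converges in $\Ltqom$ to some $E$. Since moreover $\ed E_{n}\to0$ in $\Ltqpoom$ and $\cd E_{n}\to0$ in $\Ltqmoom$, the sequence converges in the full graph norm; because $\Dqcgatom$ and $\Deqcganom$ are closed under graph convergence and $\ed,\cd$ are closed operators, the limit satisfies $\ed E=0$, $\cd E=0$ and $E\in\Dqcgatom\cap\Deqcganom$, i.e.\ $E\in\Dqczgatom\cap\Deqczganom=\harmdiq$. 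On the other hand $E_{n}\bot\harmdiq$ for every $n$, a property preserved under the $\Ltqom$-limit, so $E\bot\harmdiq$ too, and therefore $E\in\harmdiq\cap\harmdiq^{\bot}=\{0\}$. This contradicts $1=\normLtqom{E_{n}}\to\normLtqom{E}=0$, which proves the estimate with some constant $\cpq$.

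The one point that needs care is this final selection step, namely that both boundary conditions and the orthogonality $E_{n}\bot\harmdiq$ are inherited by the $\Ltqom$-limit, so that $E$ is trapped in $\harmdiq\cap\harmdiq^{\bot}$ and hence vanishes. Given the {\MCP}, i.e.\ Theorem \ref{MCPtheo}, this is routine, so the genuine difficulty is entirely absorbed into that compactness result; in particular, the resulting constant $\cpq$ is obtained non-constructively.
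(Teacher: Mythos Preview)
Your proof is correct and matches the paper's own approach, which it summarizes simply as ``a standard indirect argument utilizing the {\MCP}''; you have merely written out that argument in full. One small quibble: the {\MCP} is a hypothesis of the lemma, so the appeals to Theorem \ref{MCPtheo} are unnecessary here.
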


Here and throughout the paper, $\bot$ denotes orthogonality in $\Ltqom$.\\

\begin{proof}
A standard indirect argument utilizing the {\MCP}
yields the desired estimates.
\end{proof}

By Stokes' theorem and approximation always 
$$\Dqczgatom\subset(\cd\Deqpocganom)^{\bot},\quad
\Deqczganom\subset(\ed\Dqmocgatom)^{\bot}$$ 
hold. Equality in the latter relations is not clear
and needs another assumption on the pair $\pair$.

\begin{defi}
\mylabel{defmaxapprox}
The pair $\pair$ has the `Maxwell approximation property' {\sf(MAP)},
if for all $q$ 
$$\Dqczgatom=(\cd\Deqpocganom)^{\bot},\quad
\Deqczganom=(\ed\Dqmocgatom)^{\bot}.$$
\end{defi}

\begin{rem}
\mylabel{remmaxapproxdual}
By $*$-duality the pair $\pair$ has the \MAP, 
if and only if the pair $\pairgan$ has the \MAP, i.e.,
if and only if for all $q$ 
$$\Dqczganom=(\cd\Deqpocgatom)^{\bot},\quad
\Deqczgatom=(\ed\Dqmocganom)^{\bot}.$$
\end{rem}

\begin{rem}
\mylabel{remmaxapprox}
If $\gat=\ga$ or $\gat=\emptyset$,
the {\MAP} is simply given by the projection theorem in Hilbert spaces
and by the definitions of the respective Sobolev spaces.
For the general case $\emptyset\subset\gat\subset\ga$
with possibly $\emptyset\subsetneq\gat\subsetneq\ga$,
Jochmann proved the {\MAP} in \cite{jochmanncompembmaxmixbc}
considering the special case of a bounded domain $\om\subset\rt$.
As in Remark \ref{remmaxcomp} he needs $\om$ to be Lipschitz 
and $\gamma$ to be a Lipschitz interface.
Kuhn showed the {\MAP} in \cite{kuhndiss} for smooth domains $\om\subset\rN$ 
or even for smooth Riemannian manifolds $\om$
with smooth boundary and admissible interface $\gamma$. 
Again, a sufficient result for us has been given recently 
by Gol'dshtein and Mitrea (I. \& M.) in
\cite[Theorem 4.3, (4.16)]{goldshteinmitreairinamariushodgedecomixedbc}.
Like in Remark \ref{remmaxcomp}, for this
$\om$ has to be a weakly Lipschitz domain
and $\gat\subset\ga$ to be an admissible patch.
\end{rem}

By \cite[Theorem 4.3, (4.16)]{goldshteinmitreairinamariushodgedecomixedbc}
and the latter remark we have:

\begin{theo}
\mylabel{MAPtheo}
Let $\om$ be a weakly Lipschitz domain and $\gat$ be an admissible patch,
i.e., let $\om$ be a (weakly) Lipschitz domain and $\gat$ be an Lipschitz patch of $\ga$.
Then the pair $\pair$ has the \MAP. 
\end{theo}

\begin{lem}
\mylabel{hodgehelmdeco}
{\sf(Hodge-Helmholtz Decomposition for Differential Forms)}
Let the pair $\pair$ have the \MAP. 
Then, the orthogonal decompositions 
\begin{align*}
\Ltqom
&=\ol{\ed\Dqmocgatom}\oplus\Deqczganom\\
&=\Dqczgatom\oplus\ol{\cd\Deqpocganom}\\
&=\ol{\ed\Dqmocgatom}\oplus\harmdiq\oplus\ol{\cd\Deqpocganom}
\intertext{hold. If the pair $\pair$ has additionally the \MCP, then}
\ed\Dqmocgatom
&=\ed\big(\Dqmocgatom\cap\cd\Deqcganom\big)=\Dqczgatom\cap\harmdiq^{\bot},\\
\cd\Deqpocganom
&=\cd\big(\Deqpocganom\cap\ed\Dqcgatom\big)=\Deqczganom\cap\harmdiq^{\bot}
\intertext{and these are closed subspaces of $\Ltqom$.
Moreover, then the orthogonal decompositions}
\Ltqom
&=\ed\Dqmocgatom\oplus\Deqczganom\\
&=\Dqczgatom\oplus\cd\Deqpocganom\\
&=\ed\Dqmocgatom\oplus\harmdiq\oplus\cd\Deqpocganom
\end{align*}
hold.
\end{lem}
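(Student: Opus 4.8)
The plan is to obtain the three decompositions with closures purely from the projection theorem and the \MAP{}, and then, assuming additionally the \MCP{}, to delete the closures by means of the Poincar\'e-type estimate of Lemma \ref{poincarediff}; the identities involving $\harmdiq$ will fall out along the way.

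\emph{Step 1 (the closure decompositions; only the \MAP{} is used).} First I would record the facts that hold with no assumption whatsoever: $\ol{\ed\Dqmocgatom}\subset\Dqczgatom$ and $\ol{\cd\Deqpocganom}\subset\Deqczganom$ (since $\ed\circ\ed=0$, $\cd\circ\cd=0$, the operators commute with the tangential resp.\ normal trace, and the two kernel spaces are $\Ltqom$-closed), together with $\ed\Dqmocgatom\bot\harmdiq$ and $\cd\Deqpocganom\bot\harmdiq$, which are immediate from the orthogonality relations stated just before Definition \ref{defmaxapprox}. Applying the projection theorem in $\Ltqom$ to the subspace $\ed\Dqmocgatom$ gives $\Ltqom=\ol{\ed\Dqmocgatom}\oplus(\ed\Dqmocgatom)^{\bot}$, and by the \MAP{} the complement equals $\Deqczganom$; applied to $\cd\Deqpocganom$ it likewise gives $\Ltqom=\Dqczgatom\oplus\ol{\cd\Deqpocganom}$. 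These are the first two decompositions. Intersecting the first with the closed subspace $\Dqczgatom$ and the second with $\Deqczganom$ yields the restricted splittings $\Dqczgatom=\ol{\ed\Dqmocgatom}\oplus\harmdiq$ and $\Deqczganom=\harmdiq\oplus\ol{\cd\Deqpocganom}$ (using $\Dqczgatom\cap\Deqczganom=\harmdiq$), and inserting the second of these into the first decomposition produces the third, $\Ltqom=\ol{\ed\Dqmocgatom}\oplus\harmdiq\oplus\ol{\cd\Deqpocganom}$.

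\emph{Step 2 (removing the closures; here the \MCP{} enters).} The crucial point is that $\cd\Deqpocganom$ is a closed subspace; closedness of $\ed\Dqmocgatom$ then follows by $*$-duality (Remark \ref{remmaxapproxdual}), and closedness is exactly what lets all the closures above be dropped, giving the last three displays. To prove closedness, take $H_{n}\in\Deqpocganom$ with $\cd H_{n}\to F$ in $\Ltqom$ and decompose each $H_{n}$ by the third decomposition at rank $q+1$ into an exact summand $A_{n}\in\ol{\ed\Dqcgatom}$, a harmonic summand, and a co-exact summand. Applying $\cd$ annihilates the last two, so $\cd A_{n}=\cd H_{n}$; and $A_{n}$, being $H_{n}$ minus the other two summands, belongs to $\Deqpocganom$, is closed ($\ed A_{n}=0$ because $A_{n}\in\ol{\ed\Dqcgatom}$), and is orthogonal to the harmonic forms. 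Hence Lemma \ref{poincarediff} at rank $q+1$ yields $\norm{A_{n}}\leq c\,\norm{\cd A_{n}}=c\,\norm{\cd H_{n}}$, so $(A_{n})$ is Cauchy in $\Ltqpoom$; with $A_{n}\to A$, and since $\cd A_{n}\to F$ while $\Deqpocganom$ is closed in the $\cd$-graph norm, we get $F=\cd A\in\cd\Deqpocganom$. Once closedness is available, the identities $\Dqczgatom\cap\harmdiq^{\bot}=\ed\Dqmocgatom$ and $\Deqczganom\cap\harmdiq^{\bot}=\cd\Deqpocganom$ are read off from the now closure-free restricted splittings of Step 1, and the representations $\ed\Dqmocgatom=\ed(\Dqmocgatom\cap\cd\Deqcganom)$ and $\cd\Deqpocganom=\cd(\Deqpocganom\cap\ed\Dqcgatom)$ follow by decomposing an arbitrary element of $\Dqmocgatom$ (resp.\ $\Deqpocganom$) at rank $q-1$ (resp.\ $q+1$) and keeping only its co-exact (resp.\ exact) summand, which lies in the asserted intersection and has the same $\ed$ (resp.\ $\cd$) as the original; the reverse inclusions are trivial.

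The step I expect to be the real obstacle is the closedness argument of Step 2: inside each $\cd$-preimage class one must single out a representative to which Lemma \ref{poincarediff} genuinely applies, i.e.\ a form that simultaneously carries the correct normal trace on $\gan$, is closed, and is orthogonal to the harmonic Dirichlet-Neumann forms. Producing such a representative is precisely what the \MAP{}-decompositions of Step 1, used one rank higher, supply, so the actual work lies in keeping the ranks and complementary subspaces straight rather than in establishing any new inequality.
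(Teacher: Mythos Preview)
Your proposal is correct and follows essentially the same route as the paper's proof: the projection theorem together with the \MAP{} gives the two two-fold closure decompositions, refining one against the other yields the three-fold one, and then the Poincar\'e-type estimate of Lemma \ref{poincarediff}---applied to a potential chosen via the Hodge--Helmholtz decomposition at the neighboring rank---delivers closedness of the ranges and hence the remaining assertions. The only differences are presentational: you spell out closedness as an explicit Cauchy-sequence argument and then read off the representations $\ed\Dqmocgatom=\ed(\Dqmocgatom\cap\cd\Deqcganom)$, etc., whereas the paper first records these representations (with closures on the inner factor) and then invokes Lemma \ref{poincarediff} to conclude closedness in one line; the underlying mechanism is identical.
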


Here, $\oplus$ denotes the $\Ltqom$-orthogonal sum
and all closures are taken in $\Ltqom$.\\

\begin{proof}
By the projection theorem in Hilbert space
and the \MAP\, we obtain immediately 
the two $\Ltqom$-orthogonal decompositions
$$\ol{\ed\Dqmocgatom}\oplus\Deqczganom
=\Ltqom
=\Dqczgatom\oplus\ol{\cd\Deqpocganom},$$
where the closures are taken in $\Ltqom$. Since 
$$\ed\Dqmocgatom\subset\Dqczgatom,\quad\cd\Deqpocganom\subset\Deqczganom$$ 
and applying the latter decompositions separately to 
$\Deqczganom$ or $\Dqczgatom$
we get a refined decomposition, namely
\begin{align*}
\Ltqom&=\ol{\ed\Dqmocgatom}\oplus\harmdiq\oplus\ol{\cd\Deqpocganom}.
\intertext{Applying this decomposition to 
$\Dqmocgatom$ and $\Deqpocganom$ yields also}
\ed\Dqmocgatom&=\ed\big(\Dqmocgatom\cap\ol{\cd\Deqcganom}\big),\\
\cd\Deqpocganom&=\cd\big(\Deqpocganom\cap\ol{\ed\Dqcgatom}\big).
\end{align*}
Now, Lemma \ref{poincarediff}
shows that $\ed\Dqmocgatom$ and $\cd\Deqpocganom$ 
are even closed subspaces of $\Ltqom$.
Hence, we obtain the asserted 
Hodge-Helmholtz decompositions of $\Ltqom$.
\end{proof}

\subsection{Functions and Vector Fields}

We turn to the special case $q=1$, the case of vector fields,
and use the notations and identifications from 
\cite{neffpaulywitschgenkornrn}
and \cite{neffpaulywitschgenkornrt,neffpaulywitschgenkornrtzap,
neffpaulywitschgenkornrtsli}.
Especially, $\Ltqom$ can be identified
with the usual Lebesgue spaces 
of square integrable functions or vector fields on $\om$ 
with values in $\rz^{n}$, $n:=n_{N,q}:=\binom{N}{q}$, 
and will be denoted by $\Ltom:=\Lt(\om,\rz^{n})$.
We have the standard Sobolev spaces
\begin{align*}
\Hgom&:=\set{u\in\Lt(\om,\rz)}{\grad u\in\Lt(\om,\rz^{N})},\\
\Hdom&:=\set{v\in\Lt(\om,\rz^{N})}{\div v\in\Lt(\om,\rz)},\\
\Hcom&:=\set{v\in\Lt(\om,\rz^{N})}{\curl v\in\Lt(\om,\rz^{N(N-1)/2})}
\end{align*}
and by natural isomorphic identification
$$\qD{0}(\om)\cong\Hgom,\quad
\qDe{1}(\om)\cong\Hdom,\quad
\qD{1}(\om)\cong\Hcom.$$
Generally $\qD{q}(\om)\cong\qDe{N-q}(\om)$ 
holds by Hodge star duality.
For $v\in\Ciom$ and $N=3,4$
$$\curl v
=\begin{bmatrix}
\p_{2}v_{3}-\p_{3}v_{2}\\
\p_{3}v_{1}-\p_{1}v_{3}\\
\p_{1}v_{2}-\p_{2}v_{1}
\end{bmatrix}\in\rt,\quad
\curl v
=\begin{bmatrix}
\p_{1}v_{2}-\p_{2}v_{1}\\
\p_{1}v_{3}-\p_{3}v_{1}\\
\p_{1}v_{4}-\p_{4}v_{1}\\
\p_{2}v_{3}-\p_{3}v_{2}\\
\p_{2}v_{4}-\p_{4}v_{2}\\
\p_{3}v_{4}-\p_{4}v_{3}
\end{bmatrix}\in\rz^6$$
hold, whereas $\curl v=\p_{1}v_{2}-\p_{2}v_{1}\in\rz$
or $\curl v\in\rz^{10}$ for $N=2$ or $N=5$, respectively.

\begin{figure}
\begin{center}
\begin{tabular}{|c||c|c|c|c|}
\hline
$q$ & $0$ & $1$ & $2$ & $3$\\
\hline\hline
$\ed$ & $\grad$ & $\curl$ & $\div$ & $0$\\
\hline
$\cd$ & $0$ & $\div$ & $-\curl$ & $\grad$\\
\hline
$\Dqcgatom$ & $\Hgcgatom$ & $\Hccgatom$ & $\Hdcgatom$ & $\Ltom$\\
\hline
$\Deqcganom$ & $\Ltom$ & $\Hdcganom$ & $\Hccganom$ & $\Hgcganom$\\
\hline
$\iota_{\gat}^{*}E$ & $E|_{\gat}$ & $\nu\times E|_{\gat}$ & $\nu\cdot E|_{\gat}$ & $0$\\
\hline
$\circledast\iota_{\gan}^{*}*E$ & $0$ & $\nu\cdot E|_{\gan}$ & $-\nu\times(\nu\times E)|_{\gan}$ & $E|_{\gan}$\\[0.5mm]
\hline
\end{tabular}
\end{center}
\caption{identification table for $q$-forms and vector proxies in $\rt$}
\mylabel{qformtable}
\end{figure}

Moreover, we have the closed subspaces 
$$\Hgcgatom,\quad\Hccgatom,\quad\Hdcganom,$$ 
in which the homogeneous scalar, tangential and normal boundary conditions
$$u|_{\gat}=0,\quad\nu\times v|_{\gat}=0,\quad\nu\cdot v|_{\gan}=0$$
are generalized, as reincarnations of 
$\qDc{0}(\gat,\om)$, $\qDc{1}(\gat,\om)$ and $\qDec{1}(\gan,\om)$,
respectively. Here $\nu$ denotes the outer unit normal at $\ga$.
If $\gat=\ga$ (and $\gan=\emptyset$)
resp. $\gat=\emptyset$ (and $\gan=\ga$) 
we obtain the usual Sobolev spaces
$$\Hgcom,\quad\Hccom,\quad\Hdom$$
resp. 
$$\Hgom,\quad\Hcom,\quad\Hdcom.$$
We note that $\Hgom$ and $\Hgcom$
coincide with the usual standard Sobolev spaces 
$\Hoom$ and $\Hocom$, respectively.
As before, the index $0$, now attached to the symbols $\curl$ or $\div$,
indicates vanishing $\curl$ or $\div$, e.g.,
\begin{align*}
\Hcczgatom&=\set{v\in\Hccgatom}{\curl v=0},\\
\Hdzom&=\set{v\in\Hdom}{\div v=0}.
\end{align*}
Finally, we denote the `harmonic Dirichlet-Neumann fields' by
$$\harmdio\cong\harmdi:=\Hcczgatom\cap\Hdczganom.$$

Assuming the {\MCP} for the pair $\pair$,
then $\harmdi$ is finite dimensional by Corollary \ref{harmdifindim}
and we have the two (out of four) compact embeddings
\begin{align}
\Hgcgatom&\hookrightarrow\Ltom,\mylabel{rellich}\\
\Hccgatom\cap\Hdcganom&\hookrightarrow\Ltom,\mylabel{maxc}
\end{align}
i.e., Rellich's selection theorem ($q=0$) and
the vectorial Maxwell's compactness property ($q=1$).
Moreover, by Lemma \ref{poincarediff}
we get the following Poincar\'e and Maxwell estimates:

\begin{cor}
\mylabel{poincare}
{\sf(Poincar\'e Estimate for Functions)}
Let the pair $\pair$ have the {\MCP} and $\cp:=\cpz$. Then
\begin{align*}
\normLtom{u}&\leq\cp\normLtom{\grad u}
\intertext{holds for all $u\in\Hgcgatom$ if $\gat\neq\emptyset$
and for all $u\in\Hgom\cap\rz^{\bot}$ if $\gat=\emptyset$.
Moreover, for all $u\in\Hgom$}
\normLtom{(\id-\pi_{0})u}&\leq\cp\normLtom{\grad u}
\end{align*}
holds, where $\pi_{0}:\Ltom\to\rz$ denotes the 
$\Ltom$-orthogonal projection onto the constants.
\end{cor}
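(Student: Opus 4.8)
\textbf{Proof proposal for Corollary \ref{poincare}.}

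The plan is to derive the Poincar\'e estimate for functions as the special case $q=0$ of the Poincar\'e-type estimate for differential forms, Lemma \ref{poincarediff}, after identifying the $0$-form data with scalar function data. First I would recall the identifications from the preceding subsection: $\qD{0}(\om)\cong\Hgom$, the exterior derivative $\ed$ on $0$-forms is the gradient $\grad$, and the co-derivative $\cd$ on $0$-forms is the zero operator; moreover $\Dqcgatom$ for $q=0$ is $\Hgcgatom$ and $\Deqcganom$ for $q=0$ is all of $\Ltom$ (since the co-derivative condition is vacuous at rank $0$). Thus the space $\Dqcgatom\cap\Deqcganom$ in Lemma \ref{poincarediff} reduces to $\Hgcgatom$, and the right-hand side $\big(\normLtqpoom{\ed E}^2+\normLtqmoom{\cd E}^2\big)^{1/2}$ collapses to simply $\normLtom{\grad u}$ because $\cd u=0$ and there is no $(q-1)$-form term.

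Next I would handle the harmonic Dirichlet-Neumann forms $\harmdi[0]=\Dqczgatom\cap\Deqczganom$ at rank $0$. Since $\cd$ is trivial at rank $0$, the Neumann-type condition is automatic, so $\harmdi[0]=\Dqczgatom$, the space of functions $u\in\Hgcgatom$ with $\grad u=0$, i.e. locally constant functions on $\om$ vanishing on $\gat$. If $\gat\neq\emptyset$ then (on each connected component touching $\gat$) such a function is $0$; since the {\MCP} forces $\harmdi[0]$ to be finite dimensional by Corollary \ref{harmdifindim} and the relevant geometry rules out components disjoint from $\gat$ for a domain, $\harmdi[0]=\{0\}$, so $\pi_{0}=0$ and $\harmdi[0]^{\bot}=\Ltom$; Lemma \ref{poincarediff} then yields $\normLtom{u}\leq\cp\normLtom{\grad u}$ for all $u\in\Hgcgatom$. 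If $\gat=\emptyset$ then $\Dqczgatom$ consists of the (globally, since $\om$ is a domain hence connected) constant functions, so $\harmdi[0]\cong\rz$ and the orthogonal projection $\pi_{0}:\Ltom\to\harmdi[0]$ is exactly the projection onto the constants. The second inequality of Lemma \ref{poincarediff} becomes $\normLtom{(\id-\pi_{0})u}\leq\cp\normLtom{\grad u}$ for all $u\in\Hgom$, and restricting to $u\perp\rz$, i.e. $\pi_{0}u=0$, gives $\normLtom{u}\leq\cp\normLtom{\grad u}$; setting $\cp:=\cpz$ matches the claimed constant.

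The one genuine subtlety — and what I expect to be the main point to get right — is the identification of $\harmdi[0]$ with the constants (respectively with $\{0\}$ when $\gat\neq\emptyset$). This rests on connectedness of $\om$ (which holds since $\om$ is by assumption a domain) together with the fact, guaranteed by the {\MCP} via Corollary \ref{harmdifindim}, that $\harmdi[0]$ is finite dimensional so that no pathological infinite families of locally constant functions arise; one should also note that the weakly Lipschitz hypothesis ensures $\gat$, being a non-empty relatively open admissible patch, meets every component of $\ol\om$. Once this topological bookkeeping is in place, everything else is a direct specialization of Lemma \ref{poincarediff} and the bookkeeping of which Sobolev space the symbol $\Dqcgatom$ denotes at $q=0$, as recorded in Figure \ref{qformtable}.
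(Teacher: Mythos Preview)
Your proposal is correct and follows essentially the same approach as the paper: the corollary is stated as an immediate consequence of Lemma \ref{poincarediff} specialized to $q=0$, together with the observation (which the paper records right after the statement) that $\harmdiz=\{0\}$ if $\gat\neq\emptyset$ and $\harmdiz=\rz$ if $\gat=\emptyset$. Your only unnecessary detour is invoking Corollary \ref{harmdifindim} and the weakly Lipschitz hypothesis to control $\harmdiz$; since $\om$ is a domain and hence connected, the identification of $\harmdiz$ with $\{0\}$ respectively $\rz$ is immediate without any appeal to finite-dimensionality or boundary regularity.
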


We note that if $\gat\neq\emptyset$ we have $\harmdiz=\{0\}$.
Furthermore, $\harmdiz=\rz$ and $\Hgcgatom=\Hgom$ hold if $\gat=\emptyset$.

\begin{cor}
\mylabel{poincaremax}
{\sf(Maxwell Estimate for Vector Fields)}
Let the pair $\pair$ have the {\MCP} and $\cm:=\cpo$. Then
\begin{align*}
\normLtom{v}
&\leq\cm\big(\normLtom{\curl v}^2+\normLtom{\div v}^2\big)^{1/2}
\intertext{holds for all $v\in\Hccgatom\cap\Hdcganom\cap\harmdi^{\bot}$ as well as}
\normLtom{(\id-\pi_{1})v}
&\leq\cm\big(\normLtom{\curl v}^2+\normLtom{\div v}^2\big)^{1/2}
\end{align*}
holds for all $v\in\Hccgatom\cap\Hdcganom$,
where again $\pi_{1}:\Ltom\to\harmdi$ denotes the 
$\Ltom$-orthogonal projection onto the Dirichlet-Neumann fields $\harmdi$.
\end{cor}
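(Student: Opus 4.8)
The plan is to obtain Corollary \ref{poincaremax} as the special case $q=1$ of Lemma \ref{poincarediff}, read through the dictionary between $1$-forms on $\om$ and vector fields. First I would recall the isomorphic identifications already fixed above, $\qD{1}(\om)\cong\Hcom$ and $\qDe{1}(\om)\cong\Hdom$, under which $\ed$ acting on $1$-forms becomes $\curl$ and $\cd$ acting on $1$-forms becomes $\div$. Reading off the $q=1$ column of Figure \ref{qformtable}, the boundary-condition spaces correspond as $\Dqcgatom\cong\Hccgatom$ and $\Deqcganom\cong\Hdcganom$, so the intersection space appearing in Lemma \ref{poincarediff} is identified with $\Hccgatom\cap\Hdcganom$; likewise the space of harmonic Dirichlet--Neumann $1$-forms corresponds to the harmonic Dirichlet--Neumann fields, $\harmdio\cong\harmdi$. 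Since this identification respects the $\Lt$-inner product, the $\Ltqom$-orthogonal projection $\pi_{1}$ onto $\harmdio$ is carried to the $\Ltom$-orthogonal projection onto $\harmdi$ (which we keep calling $\pi_{1}$), and the Poincar\'e-type constant $\cpo$ is unchanged. Note that the {\MCP} hypothesis is precisely the hypothesis under which Lemma \ref{poincarediff} was proved, and via Corollary \ref{harmdifindim} it also guarantees that $\harmdi$ is finite dimensional, so that $\pi_{1}$ is a well-defined bounded projection.

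Next I would simply transcribe both estimates of Lemma \ref{poincarediff} for $q=1$. Here the target ranks $q+1$ and $q-1$ become $2$ and $0$, and $\Lt^{2}$- and $\Lt^{0}$-forms are identified respectively with the $\rz^{N(N-1)/2}$-valued fields $\curl v$ and the scalar fields $\div v$, so the two $\Lt$-norms on the right-hand side of Lemma \ref{poincarediff} turn into $\normLtom{\curl v}$ and $\normLtom{\div v}$. Thus the first inequality of Lemma \ref{poincarediff}, valid for $E\in\Dqcgatom\cap\Deqcganom\cap\harmdiq^{\bot}$, becomes
$$\normLtom{v}\leq\cpo\big(\normLtom{\curl v}^{2}+\normLtom{\div v}^{2}\big)^{1/2}$$
for all $v\in\Hccgatom\cap\Hdcganom\cap\harmdi^{\bot}$, and the second inequality, valid for all $E\in\Dqcgatom\cap\Deqcganom$, becomes
$$\normLtom{(\id-\pi_{1})v}\leq\cpo\big(\normLtom{\curl v}^{2}+\normLtom{\div v}^{2}\big)^{1/2}$$
for all $v\in\Hccgatom\cap\Hdcganom$. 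Putting $\cm:=\cpo$, these are exactly the two asserted estimates, which finishes the proof.

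I expect no genuine obstacle here: the argument is a bookkeeping translation through the form--vector proxy dictionary of Figure \ref{qformtable}. The only point deserving a line of care is that this dictionary is $\Lt$-isometric, so that orthogonality, the projection $\pi_{1}$, and the constant $\cpo$ all transfer verbatim from $1$-forms to vector fields; this is immediate from Hodge's star being a pointwise isometry on alternating forms together with the standard normalization identifying $q$-forms with $\rz^{\binom{N}{q}}$-valued functions.
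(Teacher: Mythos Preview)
Your proposal is correct and matches the paper's approach exactly: the paper does not give a separate proof for this corollary but simply presents it (together with Corollary \ref{poincare}) as the specialization of Lemma \ref{poincarediff} to $q=1$ via the form--vector-proxy identifications recalled in the surrounding text and Figure \ref{qformtable}. Your write-up just spells out that translation in detail.
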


Lemma \ref{hodgehelmdeco} yields:

\begin{cor}
\mylabel{helmdeco}
{\sf(Helmholtz Decompositions for Vector Fields)}
Let the pair $\pair$ have the {\MCP} and the \MAP.
Then, the orthogonal decompositions
\begin{align*}
\Ltom
&=\grad\Hgcgatom\oplus\Hdczganom\\
&=\Hcczgatom\oplus\big(\Hdczganom\cap\harmdi^{\bot}\big)
\end{align*} 
hold.
\end{cor}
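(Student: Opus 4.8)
The plan is to obtain both decompositions by specializing the Hodge--Helmholtz decomposition of Lemma \ref{hodgehelmdeco} to $q=1$ and translating it, via the identifications of the present subsection (cf.\ Figure \ref{qformtable}), from $1$-forms to vector fields; no new analytic input will be needed.

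Concretely, I would first invoke Lemma \ref{hodgehelmdeco} with $q=1$. Since the pair $\pair$ has both the \MAP\ and the \MCP, that lemma gives the orthogonal decompositions
$$\Ltqom = \ed\Dqmocgatom\oplus\Deqczganom = \Dqczgatom\oplus\cd\Deqpocganom = \ed\Dqmocgatom\oplus\harmdiq\oplus\cd\Deqpocganom,$$
with each summand a \emph{closed} subspace of $\Ltqom$, together with the identity $\cd\Deqpocganom = \Deqczganom\cap\harmdiq^{\bot}$. For $q=1$ the exterior derivative acting on $0$-forms is $\grad$, so $\ed\Dqmocgatom$ is identified with $\grad\Hgcgatom$; moreover $\Deqczganom$ is identified with $\Hdczganom$, $\Dqczgatom$ with $\Hcczgatom$, and the space of harmonic Dirichlet--Neumann $1$-forms with $\harmdi$. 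Inserting these identifications, the first displayed decomposition becomes $\Ltom=\grad\Hgcgatom\oplus\Hdczganom$, which is the first assertion; and since $\cd\Deqpocganom=\Deqczganom\cap\harmdiq^{\bot}$ is identified with $\Hdczganom\cap\harmdi^{\bot}$, the second displayed decomposition becomes $\Ltom=\Hcczgatom\oplus(\Hdczganom\cap\harmdi^{\bot})$, which is the second assertion.

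The substantive work --- the two-term orthogonal splittings furnished by the projection theorem and the \MAP, and the closedness of $\ed\Dqmocgatom$ and $\cd\Deqpocganom$ obtained from the Poincar\'e-type estimate of Lemma \ref{poincarediff} using the \MCP --- is already carried out in the proof of Lemma \ref{hodgehelmdeco}. Hence the only thing to verify here is the bookkeeping: that for $q=1$ the abstract form spaces with their tangential, respectively normal, boundary conditions on $\gat$ and $\gan$ correspond precisely to $\Hgcgatom$, $\Hcczgatom$, $\Hdczganom$ and to $\harmdi$, which is exactly what the identification table records. I therefore do not anticipate any genuine obstacle; the point requiring the most attention is simply attaching the boundary parts ($\gat$ versus $\gan$) to the correct vector proxy spaces.
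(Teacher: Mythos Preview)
Your proposal is correct and matches the paper's own approach exactly: the corollary is stated immediately after the sentence ``Lemma \ref{hodgehelmdeco} yields:'', so the intended proof is precisely the specialization to $q=1$ together with the form-to-vector-field identifications you describe.
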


\subsection{Tensor Fields}

Next, we extend our calculus to tensor fields, i.e., matrix fields.
For vector fields $v$ with components in $\Hgom$
and tensor fields $\T$ with rows in $\Hcom$ resp. $\Hdom$, i.e.,
$$v=\dvec{v_{1}}{\vdots}{v_{N}},\quad v_{n}\in\Hgom,\quad
\T=\dvec{\trans{\T}_{1}}{\vdots}{\trans{\T}_{N}},\quad\T_{n}\in\Hcom\text{ resp. }\Hdom$$
for $n=1,\dots,N$ we define (in the weak sense)
$$\Grad v:=\dvec{\trans{\grad}v_{1}}{\vdots}{\trans{\grad}v_{N}}=J_{v},\quad
\Curl\T:=\dvec{\trans{\curl}\T_{1}}{\vdots}{\trans{\curl}\T_{N}},\quad
\Div\T:=\dvec{\div\T_{1}}{\vdots}{\div\T_{N}},$$ 
where $J_{v}$\footnote{Sometimes, the Jacobian
$J_{v}$ is also denoted by $\na v$.} 
denotes the Jacobian of $v$ 
and $\trans{}$ the transpose.
We note that $v$ and $\Div\T$ are $N$-vector fields,
$\T$ and $\Grad v$ are $(N\times N)$-tensor fields, 
whereas $\Curl\T$ is a $(N\times N(N-1)/2)$-tensor field.
The corresponding Sobolev spaces will be denoted by
$$\HGom,\quad\HCom,\quad\HCzom,\quad\HDom,\quad\HDzom$$
and
$$\HGcgatom,\quad\HCcgatom,\quad\HCczgatom,\quad\HDcganom,\quad\HDczganom,$$
again with the usual notations if $\gat\in\{\emptyset,\ga\}$.

From Corollaries \ref{poincare},
\ref{poincaremax} and \ref{helmdeco} we obtain immediately:

\begin{cor}
\mylabel{poincarevec}
{\sf(Poincar\'e Estimate for Vector Fields)}
Let the pair $\pair$ have the \MCP. Then
\begin{align*}
\normLtom{v}&\leq\cp\normLtom{\Grad v}
\intertext{holds for all $v\in\HGcgatom$ 
if $\gat\neq\emptyset$
and for all $v\in\HGom\cap(\rN)^{\bot}$ if $\gat=\emptyset$.
Moreover, for all $v\in\HGom$}
\normLtom{(\id-\pi_{0}^N)v}&\leq\cp\normLtom{\Grad v}
\end{align*}
holds, where $\pi_{0}^N:\Ltom\to\rN$ denotes the 
$\Ltom$-orthogonal projection onto $\rN$.
\end{cor}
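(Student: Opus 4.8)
The plan is to reduce the assertion to the scalar Poincar\'e estimate of Corollary \ref{poincare}, applied separately to each of the $N$ rows of $v$. Writing $v=(v_{1},\dots,v_{N})$ with $v_{n}\in\Hgom$, the very definition of $\Grad$ shows that the $n$-th row of $\Grad v$ is $\grad v_{n}$ (written as a row), so that with respect to the Euclidean norms on $\rN$ and $\rNtN$ we have
$$\normLtom{v}^{2}=\sum_{n=1}^{N}\normLtom{v_{n}}^{2},\qquad
\normLtom{\Grad v}^{2}=\sum_{n=1}^{N}\normLtom{\grad v_{n}}^{2}.$$
Hence, once each scalar component is controlled by $\cp\normLtom{\grad v_{n}}$, squaring and summing over $n$ reproduces the claimed estimate with the \emph{same} constant $\cp=\cpz$.

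First I would dispatch the case $\gat\neq\emptyset$. Here $v\in\HGcgatom$ is by definition equivalent to $v_{n}\in\Hgcgatom$ for all $n$, so Corollary \ref{poincare} gives $\normLtom{v_{n}}\leq\cp\normLtom{\grad v_{n}}$ for each $n$; summing the squares yields $\normLtom{v}\leq\cp\normLtom{\Grad v}$.

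For the case $\gat=\emptyset$ I would first observe that the projection $\pi_{0}^{N}$ onto constant vector fields acts component-wise as the scalar mean-value projection $\pi_{0}$, i.e.\ the $n$-th component of $\pi_{0}^{N}v$ is $\pi_{0}v_{n}$. In particular $v\in\HGom\cap(\rN)^{\bot}$ holds precisely when $v_{n}\in\Hgom\cap\rz^{\bot}$ for every $n$, so the case $\gat=\emptyset$ of Corollary \ref{poincare} applies to each row and, after summing squares, gives $\normLtom{v}\leq\cp\normLtom{\Grad v}$. Applying instead the ``moreover'' part of Corollary \ref{poincare} to each component $v_{n}$ of an arbitrary $v\in\HGom$ and using the component-wise action of $\pi_{0}^{N}$ gives $\normLtom{(\id-\pi_{0}^{N})v}\leq\cp\normLtom{\Grad v}$.

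No genuine difficulty arises; the only thing to keep track of is that the vector/tensor spaces $\HGom$, $\HGcgatom$, the operator $\Grad$, the boundary condition on $\gat$, and the projection $\pi_{0}^{N}$ all decouple across the $N$ components, and that the relevant norms are the Frobenius/Euclidean ones, so that the scalar Poincar\'e constant $\cp$ carries over verbatim through the sum of squares. This is exactly why the statement is recorded as an immediate corollary of Corollary \ref{poincare}.
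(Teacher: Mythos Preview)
Your proposal is correct and matches the paper's approach: the paper simply states that Corollary \ref{poincarevec} follows immediately from Corollary \ref{poincare}, and your component-wise reduction is precisely the intended argument.
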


\begin{cor}
\mylabel{poincaremaxten}
{\sf(Maxwell Estimate for Tensor Fields)}
Let the pair $\pair$ have the \MCP. Then
\begin{align*}
\normLtom{\T}
&\leq\cm\big(\normLtom{\Curl\T}^2+\normLtom{\Div\T}^2\big)^{1/2}
\intertext{holds for all $\T\in\HCcgatom\cap\HDcganom\cap(\harmdi^N)^{\bot}$ as well as}
\normLtom{(\id-\pi_{1}^{N})\T}
&\leq\cm\big(\normLtom{\Curl\T}^2+\normLtom{\Div\T}^2\big)^{1/2}
\end{align*}
holds for all $\T\in\HCcgatom\cap\HDcganom$,
where $\pi_{1}^{N}:\Ltom\to\harmdi^{N}$ denotes the 
$\Ltom$-orthogonal projection onto the ($N$-times)-Dirichlet-Neumann fields $\harmdi^N$.
\end{cor}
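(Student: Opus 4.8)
The plan is to reduce this tensorial estimate to the vectorial Maxwell estimate of Corollary \ref{poincaremax} by applying it row by row. First I would record the elementary bookkeeping built into the definitions of the tensor spaces: a tensor field $\T$ with rows $\trans{\T}_1,\dots,\trans{\T}_N$ lies in $\HCcgatom\cap\HDcganom$ if and only if each row $\T_n$ lies in $\Hccgatom\cap\Hdcganom$, and in that case $\Curl\T$ is the tensor field whose $n$-th row is $\curl\T_n$ while $\Div\T$ is the vector field whose $n$-th component is $\div\T_n$. Consequently $\normLtom{\T}^2=\sum_{n=1}^N\normLtom{\T_n}^2$, and likewise $\normLtom{\Curl\T}^2=\sum_{n=1}^N\normLtom{\curl\T_n}^2$ and $\normLtom{\Div\T}^2=\sum_{n=1}^N\normLtom{\div\T_n}^2$.

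Next I would unravel the meaning of $\harmdi^N$ and $\pi_1^N$: $\harmdi^N$ is the space of those tensor fields all of whose rows belong to $\harmdi$, and $\pi_1^N$ is the row-wise application of $\pi_1$. A one-line computation then shows that $\T\bot\harmdi^N$ in $\Ltom$ is equivalent to $\T_n\bot\harmdi$ in $\Ltom$ for every $n$: testing $\T$ against the tensor field whose $k$-th row is an arbitrary $h\in\harmdi$ and whose remaining rows vanish yields $\langle\T_k,h\rangle_{\Ltom}=0$, and the converse is immediate by summing over rows.

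With these preliminaries, for $\T\in\HCcgatom\cap\HDcganom\cap(\harmdi^N)^{\bot}$ each row satisfies $\T_n\in\Hccgatom\cap\Hdcganom\cap\harmdi^{\bot}$, so Corollary \ref{poincaremax} applies to $\T_n$; squaring and summing over $n=1,\dots,N$ gives
$$\normLtom{\T}^2=\sum_{n=1}^N\normLtom{\T_n}^2\leq\cm^2\sum_{n=1}^N\big(\normLtom{\curl\T_n}^2+\normLtom{\div\T_n}^2\big)=\cm^2\big(\normLtom{\Curl\T}^2+\normLtom{\Div\T}^2\big),$$
and taking square roots is precisely the first asserted inequality, with the same constant $\cm$ (no deterioration, since one sums the squared row estimates). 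For the second inequality I would note that $(\id-\pi_1^N)\T$ has rows $(\id-\pi_1)\T_n\bot\harmdi$ and, because harmonic Dirichlet--Neumann fields are curl- and divergence-free, $\Curl\big((\id-\pi_1^N)\T\big)=\Curl\T$ and $\Div\big((\id-\pi_1^N)\T\big)=\Div\T$; hence the first inequality applied to $(\id-\pi_1^N)\T$ yields the claim. (Alternatively one invokes the second estimate of Corollary \ref{poincaremax} directly, row by row.)

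There is no genuine obstacle here; the result is a straightforward ``diagonal'' lift of the vectorial Maxwell estimate. The only points requiring a modicum of care are the identification of $\harmdi^N$ and $\pi_1^N$ as row-wise objects — in particular checking that orthogonality to the tensor space $\harmdi^N$ decouples into row-wise orthogonality to $\harmdi$ — and the trivial but essential observation that summing the squared per-row inequalities preserves the constant $\cm$.
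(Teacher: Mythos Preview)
Your proposal is correct and matches the paper's own (implicit) argument: the paper simply states that Corollaries \ref{poincarevec}--\ref{helmdecoten} are obtained ``immediately'' from Corollaries \ref{poincare}, \ref{poincaremax}, \ref{helmdeco}, i.e., by the row-wise reduction you spell out. Your care in checking that orthogonality to $\harmdi^N$ decouples row-wise and that summing squared row estimates preserves $\cm$ is exactly the content behind the word ``immediately''.
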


\begin{cor}
\mylabel{helmdecoten}
{\sf(Helmholtz Decompositions for Tensor Fields)}
Let the pair $\pair$ have the {\MCP} and the \MAP.
Then, the orthogonal decompositions
\begin{align*}
\Ltom
&=\Grad\HGcgatom\oplus\HDczganom\\
&=\HCczgatom\oplus\big(\HDczganom\cap(\harmdi^{N})^{\bot}\big)
\end{align*} 
hold.
\end{cor}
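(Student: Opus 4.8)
The plan is to obtain both decompositions simply by applying the vector-field Helmholtz decompositions of Corollary \ref{helmdeco} row by row. The point is that, by the very definitions given in the subsection on tensor fields, a tensor field $\T\in\Lt(\om,\rNtN)$ is an $N$-tuple of rows $\T_{1},\dots,\T_{N}\in\Lt(\om,\rN)$; the operators $\Grad$, $\Curl$, $\Div$ act row-wise; the spaces $\HGcgatom$, $\HCczgatom$, $\HDczganom$ consist exactly of those tensor fields whose rows lie in $\Hgcgatom$, $\Hcczgatom$, $\Hdczganom$ respectively; and $\harmdi^{N}=\harmdi\times\dots\times\harmdi$, so that $(\harmdi^{N})^{\bot}$ is the set of tensor fields each of whose rows is $\bot\,\harmdi$ in $\Lt(\om,\rN)$. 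Finally, the inner product on $\rNtN$ is the sum of the Euclidean inner products of the rows, hence $\Lt(\om,\rNtN)$ is the $\Lt$-orthogonal sum of $N$ copies of $\Lt(\om,\rN)$, and orthogonality of two tensor fields is equivalent to row-wise orthogonality of their rows.

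With these identifications in hand I would first prove the decomposition $\Lt(\om,\rNtN)=\Grad\HGcgatom\oplus\HDczganom$. Given $\T$, apply the first decomposition of Corollary \ref{helmdeco} to each row: $\T_{n}=\grad u_{n}+\psi_{n}$ with $u_{n}\in\Hgcgatom$, $\psi_{n}\in\Hdczganom$ and $\grad u_{n}\bot\psi_{n}$. Collecting the $u_{n}$ into a vector field $v$ (with components in $\Hgcgatom$, that is, $v\in\HGcgatom$) and the $\psi_{n}$ into the rows of a tensor field $\Psi\in\HDczganom$, one gets $\T=\Grad v+\Psi$, and $\Grad v\bot\Psi$ in $\Lt(\om,\rNtN)$ by summing the row-wise orthogonalities; triviality of the intersection $\Grad\HGcgatom\cap\HDczganom$ follows the same way from Corollary \ref{helmdeco}. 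This gives the first orthogonal decomposition.

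The second decomposition $\Lt(\om,\rNtN)=\HCczgatom\oplus\bigl(\HDczganom\cap(\harmdi^{N})^{\bot}\bigr)$ is obtained in exactly the same manner, now applying the second decomposition of Corollary \ref{helmdeco} row-wise, $\T_{n}=\varphi_{n}+\eta_{n}$ with $\varphi_{n}\in\Hcczgatom$ and $\eta_{n}\in\Hdczganom\cap\harmdi^{\bot}$, and then reassembling the rows; membership of the second summand in $(\harmdi^{N})^{\bot}$ is precisely the statement that each $\eta_{n}\bot\harmdi$.

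There is essentially no obstacle here: the content of the corollary is entirely contained in Corollary \ref{helmdeco} (equivalently, in Lemma \ref{hodgehelmdeco} for $q=1$), and the only thing one has to be careful about is the bookkeeping in the first paragraph, namely spelling out that the tensorial spaces and operators are precisely the row-wise versions of the vectorial ones and that $\Lt(\om,\rNtN)$-orthogonality decouples into row-wise orthogonality. Once that is recorded, both decompositions drop out immediately.
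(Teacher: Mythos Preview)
Your proposal is correct and matches the paper's approach exactly: the paper states that Corollary \ref{helmdecoten} (together with Corollaries \ref{poincarevec} and \ref{poincaremaxten}) follows ``immediately'' from the corresponding vector-field results, in particular Corollary \ref{helmdeco}, without giving any further argument. You have simply spelled out the row-wise bookkeeping that the paper leaves implicit.
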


We also need Korn's First Inequality.

\begin{defi}
\mylabel{secondkorn}
{\sf(Korn's Second Inequality)}
The domain $\om$ has the `Korn property' {\sf(KP)}, if
\begin{itemize}
\item[\bf(i)]
Korn's second inequality holds, this is,
there exists a constant $c>0$, such that for all vector fields $v\in\HGom$
$$c\normLtom{\Grad v}\leq\normLtom{v}+\normLtom{\sym\Grad v},$$
\item[\bf(ii)]
and Rellich's selection theorem holds for $\Hgom$, this is,
the natural embedding $\Hgom\hookrightarrow\Ltom$ is compact.
\end{itemize}
\end{defi}

Here, we introduce the symmetric and skew-symmetric parts 
$$\sym\T:=\frac{1}{2}(\T+\trans{\T}),\quad
\skew\T:=\T-\sym\T=\frac{1}{2}(\T-\trans{\T})$$
of a tensor field $\T=\skew\T+\sym\T$\footnote{Note that $\sym\T$ and $\skew\T$
are point-wise orthogonal with respect to the standard inner product in $\rNtN$.}. 

\begin{rem}
\mylabel{secondkornrem}
There exists a rich amount of literature for the \KP,
which we do not intend to list here.
We refer to our overview on Korn's inequalities in \cite{neffpaulywitschgenkornrtsli}.
\end{rem}

\begin{theo}
\mylabel{secondkorntheo}
Korn's second inequality holds for domains $\om$ having the strict cone property.
For domains $\om$ with the segment property, Rellich's selection theorem for $\Hgom$ is valid.
Thus, e.g., Lipschitz domains $\om$ possess the \KP.
\end{theo}

\begin{proof}
Book of Leis \cite{leisbook}.
\end{proof}

By a standard indirect argument we immediately obtain:

\begin{cor}
\mylabel{korn}
{\sf(Korn's First Inequality: Standard Version)}
Let $\om$ have the \KP. Then, there exists a constant $\cks>0$ 
such that the following holds:
\begin{itemize}
\item[\bf(i)] If $\gat\neq\emptyset$ then
\begin{align}
\mylabel{firstkornstandard}
(1+\cp^2)^{-1/2}\norm{v}_{\Hoom}\leq\normLtom{\Grad v}\leq\cks\normLtom{\sym\Grad v}
\end{align}
holds for all vector fields $v\in\HGcgatom$.
\item[\bf(ii)] If $\gat=\emptyset$, 
then the inequalities \eqref{firstkornstandard}
hold for all vector fields $v\in\HGom$ with $\Grad v\bot\soN$ and $v\bot\rN$.
Moreover, the second inequality of \eqref{firstkornstandard} holds
for all vector fields $v\in\HGom$ with $\Grad v\bot\soN$.
For all $v\in\HGom$
\begin{align}
\mylabel{firstkornstandardempty}
(1+\cp^2)^{-1/2}\norm{v-r_{v}}_{\Hoom}\leq\normLtom{\Grad v-\Tskew_{\Grad v}}
\leq\cks\normLtom{\sym\Grad v}
\end{align}
holds, where the ridgid motion $r_{v}$ and the skew-symmetric tensor 
$\Tskew_{\Grad v}=\Grad r_{v}$ are given by $r_{v}(x):=\Tskew_{\Grad v}x+b_{v}$ and
$$\Tskew_{\Grad v}:=\skew\oint_{\om}\Grad v\dl\in\soN,\quad
b_{v}:=\oint_{\om}v\dl-\Tskew_{\Grad v}\oint_{\om}x\dl_{x}\in\rN.$$
We note $v-r_{v}\bot\,\rN$ and $\Grad(v-r_{v})=\Grad v-\Tskew_{\Grad v}\bot\soN$.
\end{itemize}
\end{cor}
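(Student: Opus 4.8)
\section*{Proof proposal}

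The plan is to run the classical compactness (indirect) argument built on the Korn property \KP\ (Definition \ref{secondkorn}), treating the left and right inequalities separately and the two cases $\gat\neq\emptyset$ and $\gat=\emptyset$ in parallel. The left inequality in \eqref{firstkornstandard} is immediate: for $v\in\HGcgatom$ with $\gat\neq\emptyset$ the Poincar\'e estimate of Corollary \ref{poincarevec} gives $\normLtom{v}\leq\cp\normLtom{\Grad v}$, hence $\norm{v}_{\Hoom}^{2}=\normLtom{v}^{2}+\normLtom{\Grad v}^{2}\leq(1+\cp^{2})\normLtom{\Grad v}^{2}$; the $\gat=\emptyset$ part of Corollary \ref{poincarevec}, applied to vector fields $v\bot\rN$, yields the left inequality in \eqref{firstkornstandardempty} in the same way once the reduction in the last paragraph has been made.

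For the right inequality $\normLtom{\Grad v}\leq\cks\normLtom{\sym\Grad v}$ I would argue by contradiction. If no such constant existed there would be a sequence $(v_{n})$ --- in $\HGcgatom$ if $\gat\neq\emptyset$, and in $\HGom$ with $\Grad v_{n}\bot\soN$ and $v_{n}\bot\rN$ if $\gat=\emptyset$ --- with $\normLtom{\Grad v_{n}}=1$ and $\normLtom{\sym\Grad v_{n}}\to0$. By the appropriate case of Corollary \ref{poincarevec} the sequence $(v_{n})$ is bounded in $\HGom$, so by Rellich's selection theorem (part (ii) of the \KP, applied componentwise) a subsequence, still written $(v_{n})$, converges in $\Ltom$ to some $v$. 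Applying Korn's second inequality (part (i) of the \KP) to the differences gives $\normLtom{\Grad(v_{n}-v_{m})}\leq c\big(\normLtom{v_{n}-v_{m}}+\normLtom{\sym\Grad(v_{n}-v_{m})}\big)\to0$, so $(\Grad v_{n})$ is Cauchy in $\Ltom$ and $v_{n}\to v$ in $\Hoom$. Thus $v$ belongs to the respective closed space, $\normLtom{\Grad v}=1$ and $\sym\Grad v=0$, and in the case $\gat=\emptyset$ also $\Grad v\bot\soN$ and $v\bot\rN$.

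The core remaining point is to deduce $v=0$, contradicting $\normLtom{\Grad v}=1$. From $\sym\Grad v=0$ and the standard distributional identity $\p_{i}\p_{j}v_{k}=\p_{i}(\sym\Grad v)_{jk}+\p_{j}(\sym\Grad v)_{ik}-\p_{k}(\sym\Grad v)_{ij}=0$ one gets that $v$ is an infinitesimal rigid motion, $v(x)=\Tskew x+b$ with $\Tskew\in\soN$, $b\in\rN$. If $\gat\neq\emptyset$, the vanishing trace $v|_{\gat}=0$ on the nonempty relatively open Lipschitz patch $\gat$ forces $\Tskew=0$ and $b=0$; if $\gat=\emptyset$, then $\Grad v=\Tskew\in\soN$ together with $\Grad v\bot\soN$ yields $\Tskew=0$, after which $v\equiv b$ with $v\bot\rN$ yields $b=0$. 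In both cases $v=0$. I expect this step --- identifying $\ker\sym\Grad$ with the rigid motions and then invoking the elementary fact that a nontrivial rigid motion cannot vanish on an open boundary patch --- to be the only genuinely non-routine ingredient, the rest being the textbook compactness scheme. Combining the two inequalities gives the full chain \eqref{firstkornstandard}.

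It remains to pass from the $\gat=\emptyset$ estimates for fields $\bot\rN$, $\bot\soN$ to \eqref{firstkornstandardempty} for arbitrary $v\in\HGom$. I would set $\Tskew_{\Grad v}:=\skew\oint_{\om}\Grad v\dl\in\soN$ and $b_{v}:=\oint_{\om}v\dl-\Tskew_{\Grad v}\oint_{\om}x\dl_{x}\in\rN$, so that $r_{v}(x):=\Tskew_{\Grad v}x+b_{v}$ gives $v-r_{v}\bot\rN$. Using $\langle T,S\rangle_{\Ltom}=|\om|\,\langle\skew\oint_{\om}T\dl,\,S\rangle$ for constant $S\in\soN$ one checks $\skew\oint_{\om}\Grad(v-r_{v})\dl=\Tskew_{\Grad v}-\Tskew_{\Grad v}=0$, i.e.\ $\Grad(v-r_{v})=\Grad v-\Tskew_{\Grad v}\bot\soN$; moreover $\sym\Grad r_{v}=\sym\Tskew_{\Grad v}=0$, so $\sym\Grad(v-r_{v})=\sym\Grad v$. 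Applying part (ii) to $v-r_{v}$ then gives \eqref{firstkornstandardempty}. Finally, the right inequality for an arbitrary $v$ with $\Grad v\bot\soN$ follows because replacing $v$ by $v-\oint_{\om}v\dl$ changes neither $\Grad v$ nor $\sym\Grad v$ and makes $v\bot\rN$.
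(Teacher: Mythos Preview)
Your argument is correct and is precisely the ``standard indirect argument'' the paper invokes without detail: the paper's proof of this corollary consists of the single sentence ``By a standard indirect argument we immediately obtain,'' and your compactness/contradiction scheme based on the two ingredients of the \KP\ (Korn's second inequality plus Rellich) together with the identification of $\ker(\sym\Grad)$ with rigid motions is exactly what is meant. One minor remark: you cite Corollary~\ref{poincarevec}, which formally assumes the {\MCP}, whereas Corollary~\ref{korn} only assumes the \KP; this is harmless since the Poincar\'e estimate you need follows already from Rellich's selection theorem (part (ii) of the \KP) by the same indirect reasoning.
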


Here, we generally define
$$\oint_{\om}u\dl:=\lambda(\om)^{-1}\int_{\om}u\dl,\quad\lambda\text{ Lebesgue's measure.}$$
We note that $\Tskew_{\Grad v}=\pi_{\soN}\Grad v\in\soN$, where
$\pi_{\soN}:\Ltom\to\soN$ denotes the $\Ltom$-orthogonal projection 
onto the constant skew-symmetric tensor fields $\soN$. 
Moreover, we have generally 
for square integrable $(N\times N)$-tensor fields $\T$
\begin{align}
\mylabel{defTpi}
\pi_{\soN}\T:=\Tskew_{\T}:=\skew\oint_{\om}\T\dl\in\soN.
\end{align}

\subsection{Sliceable and Admissible Domains}

The essential tools to prove 
our main result Theorem \ref{maintheo} are 
\begin{itemize}
\item the Maxwell estimate for tensor fields (Corollary \ref{poincaremaxten}),
\item the Helmholtz decomposition for tensor fields (Corollary \ref{helmdecoten}),
\item and a generalized version of Korn's first inequality (Corollary \ref{korn}).
\end{itemize}
For the first two tools the pair $\pair$ needs to have the {\MCP} and the \MAP.
The third tool will be provided in Lemma \ref{genkornlem}
and needs at least the \KP.
As already pointed out, these three properties hold, e.g., 
for Lipschitz domains $\om$
and admissible boundary patches $\gat$.
Moreover, we will make use of the fact that any irrotational vector field
is already a gradient if the underlying domain is simply connected.
For this, we present a trick, the concept of sliceable domains, 
which we have used already in \cite{neffpaulywitschgenkornrtsli}.

\begin{figure}
\center
\begin{minipage}{7cm}
\center
\includegraphics[scale=0.08]{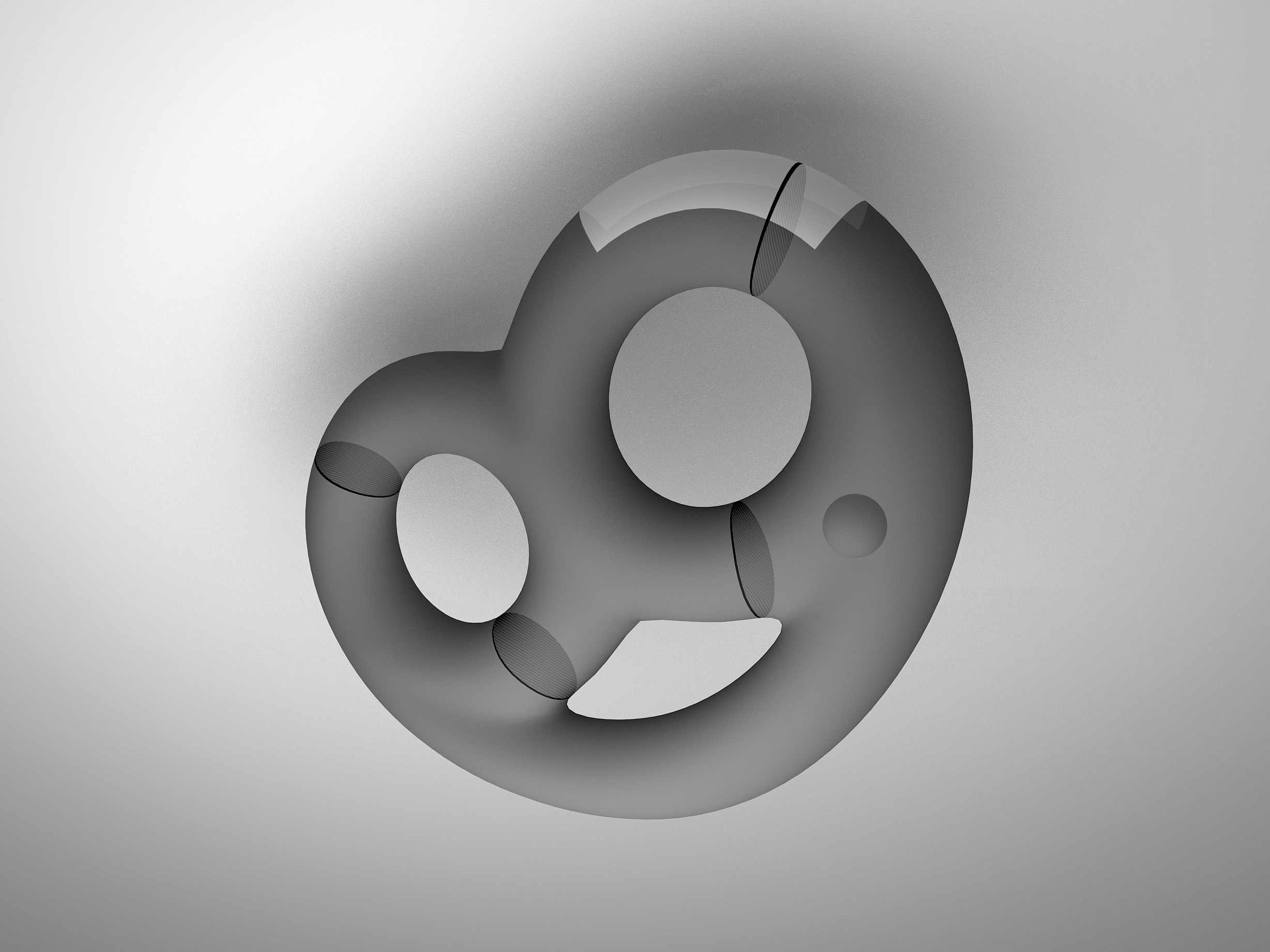}
\vspace*{-8mm}
\begin{tikzpicture}
\fill[line width=3pt,gray,rounded corners=20pt,shading=radial]
(0.5,1.5)--(1,2)--(3,1.5)--(5,2)--(6,1)--(5,0)--(3,0.5)--(1,0)--(0,1)--(1,2)--(0.5,1.5);
\draw[line width=3pt,rounded corners=20pt]
(0.5,1.5)--(1,2)--(3,1.5)--(5,2)--(6,1)--(5,0)--(3,0.5)--(1,0)--(0,1)--(1,2)--(0.5,1.5);
\draw[line width=3pt,lightgray,rounded corners=20pt](0.5,1.5)--(1,2)--(2,1.752);
\draw[line width=1pt](1.2,1.89)--(1.3,1.0)--(0.9,0.2);
\draw[line width=1pt](1.8,1.855)..controls(1.5,0.5)and(3,0.5)..(5,1.8);
\draw[line width=1pt,dashed](1.6,1.90)..controls(1.6,-0.7)and(4,1.8)..(4.32,0.2);
\fill[white](1.3,1)circle(0.3);
\draw[line width=3pt](1.3,1)circle(0.3);
\fill[white](4.6,1.4)circle(0.3);
\draw[line width=3pt](4.6,1.4)circle(0.3);
\end{tikzpicture}
\vspace*{-8mm}
\begin{tikzpicture}
\fill[line width=3pt,gray,rounded corners=20pt,shading=radial]
(0.5,1.5)--(1,2)--(3,1.5)--(5,2)--(6,1)--(5,0)--(3,0.5)--(1,0)--(0,1)--(1,2)--(0.5,1.5);
\draw[line width=3pt,rounded corners=20pt]
(0.5,1.5)--(1,2)--(3,1.5)--(5,2)--(6,1)--(5,0)--(3,0.5)--(1,0)--(0,1)--(1,2)--(0.5,1.5);
\draw[line width=3pt,lightgray,rounded corners=20pt](0.5,1.5)--(1,2)--(2,1.752);
\draw[line width=1pt](1.2,1.89)--(1.3,1.0)--(0.9,0.2);
\draw[line width=1pt](1.8,1.855)..controls(1.5,0.5)and(3,0.5)..(5,1.8);
\draw[line width=1pt](1.6,1.90)..controls(1.6,-0.7)and(4,1.8)..(4.32,0.2);
\fill[white](1.3,1)circle(0.3);
\draw[line width=3pt](1.3,1)circle(0.3);
\fill[white](4.6,1.4)circle(0.3);
\draw[line width=3pt](4.6,1.4)circle(0.3);
\fill[white](4.1,0.7)circle(0.3);
\draw[line width=3pt](4.1,0.7)circle(0.3);
\end{tikzpicture}
\begin{tikzpicture}
\fill[line width=3pt,gray,rounded corners=20pt,shading=radial]
(0.5,1.5)--(1,2)--(3,1.5)--(5,2)--(6,1)--(5,0)--(3,0.5)--(1,0)--(0,1)--(1,2)--(0.5,1.5);
\draw[line width=3pt,rounded corners=20pt]
(0.5,1.5)--(1,2)--(3,1.5)--(5,2)--(6,1)--(5,0)--(3,0.5)--(1,0)--(0,1)--(1,2)--(0.5,1.5);
\draw[line width=3pt,lightgray,rounded corners=20pt](0.5,1.5)--(1,2)--(2,1.752);
\draw[line width=1pt](1.2,1.89)--(1.3,1.0)--(0.9,0.2);
\draw[line width=1pt](1.8,1.855)..controls(1.5,1.5)and(3,0.5)..(5,1.8);
\draw[line width=1pt](1.6,1.90)..controls(1.6,-0.7)and(4,1.8)..(5.5,0.45);
\fill[white](1.3,1)circle(0.3);
\draw[line width=3pt](1.3,1)circle(0.3);
\fill[white](4.6,1.4)circle(0.3);
\draw[line width=3pt](4.6,1.4)circle(0.3);
\fill[white](2.6,1.2)circle(0.2);
\draw[line width=3pt](2.6,1.2)circle(0.2);
\fill[white](3.6,1.2)circle(0.2);
\draw[line width=3pt](3.6,1.2)circle(0.2);
\fill[white](2.2,0.7)circle(0.2);
\draw[line width=3pt](2.2,0.7)circle(0.2);
\fill[white](3.2,0.75)circle(0.2);
\draw[line width=3pt](3.2,0.75)circle(0.2);
\fill[white](4.2,0.8)circle(0.2);
\draw[line width=3pt](4.2,0.8)circle(0.2);
\fill[white](5.2,0.8)circle(0.2);
\draw[line width=3pt](5.2,0.8)circle(0.2);
\end{tikzpicture}
\end{minipage}
\begin{minipage}{7cm}
\vspace*{-6mm}
\center
\includegraphics[scale=0.08]{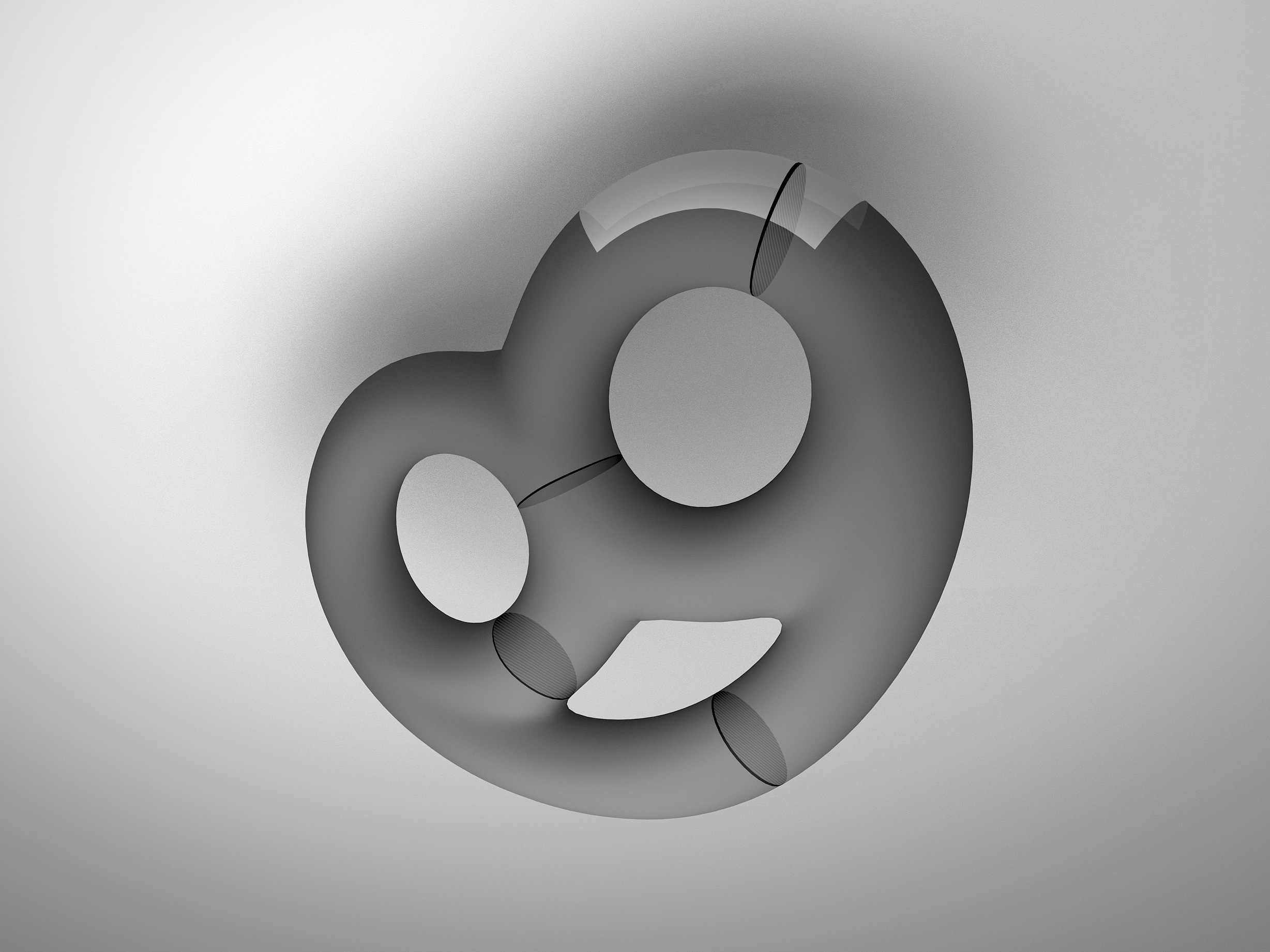}
\vspace*{-1mm}
\begin{tikzpicture}
\fill[line width=3pt,gray,rounded corners=20pt,shading=axis]
(0.5,1.5)--(1,2)--(3,1.5)--(5,2)--(6,1)--(5,0)--(3,0.5)--(1,0)--(0,1)--(1,2)--(0.5,1.5);
\draw[line width=3pt,rounded corners=20pt]
(0.5,1.5)--(1,2)--(3,1.5)--(5,2)--(6,1)--(5,0)--(3,0.5)--(1,0)--(0,1)--(1,2)--(0.5,1.5);
\draw[line width=3pt,lightgray,rounded corners=20pt](0.5,1.5)--(1,2)--(2,1.752);
\draw[line width=1pt](1.2,1.89)--(1.3,1.0)--(4.6,1)--(4.3,0.15);
\fill[white](1.3,1)circle(0.3);
\draw[line width=3pt](1.3,1)circle(0.3);
\fill[white](4.6,1)circle(0.3);
\draw[line width=3pt](4.6,1)circle(0.3);
\end{tikzpicture}
\vspace*{-1mm}
\begin{tikzpicture}
\fill[line width=3pt,gray,rounded corners=20pt,shading=axis]
(0.5,1.5)--(1,2)--(3,1.5)--(5,2)--(6,1)--(5,0)--(3,0.5)--(1,0)--(0,1)--(1,2)--(0.5,1.5);
\draw[line width=3pt,rounded corners=20pt]
(0.5,1.5)--(1,2)--(3,1.5)--(5,2)--(6,1)--(5,0)--(3,0.5)--(1,0)--(0,1)--(1,2)--(0.5,1.5);
\draw[line width=3pt,lightgray,rounded corners=20pt](0.5,1.5)--(1,2)--(2,1.752);
\draw[line width=1pt,rounded corners=0pt]
(1.2,1.89)--(1.3,1.0)--(4.1,0.6)--(4.6,1.5)--(5.5,0.45);
\fill[white](1.3,1)circle(0.3);
\draw[line width=3pt](1.3,1)circle(0.3);
\fill[white](4.6,1.4)circle(0.3);
\draw[line width=3pt](4.6,1.4)circle(0.3);
\fill[white](4.1,0.7)circle(0.3);
\draw[line width=3pt](4.1,0.7)circle(0.3);
\end{tikzpicture}
\begin{tikzpicture}
\fill[line width=3pt,gray,rounded corners=20pt,shading=axis]
(0.5,1.5)--(1,2)--(3,1.5)--(5,2)--(6,1)--(5,0)--(3,0.5)--(1,0)--(0,1)--(1,2)--(0.5,1.5);
\draw[line width=3pt,rounded corners=20pt]
(0.5,1.5)--(1,2)--(3,1.5)--(5,2)--(6,1)--(5,0)--(3,0.5)--(1,0)--(0,1)--(1,2)--(0.5,1.5);
\draw[line width=3pt,lightgray,rounded corners=20pt](0.5,1.5)--(1,2)--(2,1.752);
\draw[line width=1pt,rounded corners=0pt]
(1.2,1.89)--(1.3,1.0)--(2.2,0.7)--(2.6,1.3)--(3.2,0.7)--(3.6,1.2)--(4.2,0.8)--(4.6,1.5)--(5.5,0.45);
\fill[white](1.3,1)circle(0.3);
\draw[line width=3pt](1.3,1)circle(0.3);
\fill[white](4.6,1.4)circle(0.3);
\draw[line width=3pt](4.6,1.4)circle(0.3);
\fill[white](2.6,1.2)circle(0.2);
\draw[line width=3pt](2.6,1.2)circle(0.2);
\fill[white](3.6,1.2)circle(0.2);
\draw[line width=3pt](3.6,1.2)circle(0.2);
\fill[white](2.2,0.7)circle(0.2);
\draw[line width=3pt](2.2,0.7)circle(0.2);
\fill[white](3.2,0.75)circle(0.2);
\draw[line width=3pt](3.2,0.75)circle(0.2);
\fill[white](4.2,0.8)circle(0.2);
\draw[line width=3pt](4.2,0.8)circle(0.2);
\fill[white](5.2,0.8)circle(0.2);
\draw[line width=3pt](5.2,0.8)circle(0.2);
\end{tikzpicture}
\end{minipage}
\vspace*{-8mm}
\caption{Some ways to `cut' sliceable domains $\om$ in $\rt$ and $\rz^2$
into two ($J=2$) or more ($J=3,4$) `pieces'.
The boundary part $\gat$ is colored in light gray.
Roughly speaking, a domain is sliceable
if it can be cut into finitely many simply connected Lipschitz pieces $\om_{j}$, i.e., 
any closed curve inside some piece $\om_{j}$
is homotop to a point, this is, one has to cut all `handles'. 
In three and higher dimensions, holes inside $\om$ are permitted,
but this is forbidden in the two-dimensional case.
Note that, in these examples it is always possible to slice $\om$
into two ($J=2$) pieces.}
\mylabel{brezel}
\end{figure}

\begin{defi}
\mylabel{domdefsli}
The pair $\pair$ is called `sliceable', 
if there exist $J\in\nz$ and $\om_{j}\subset\om$, $j=1,\dots,J$, 
such that $\om\setminus(\om_{1}\cup\ldots\cup\om_{J})$ has zero Lebesgue-measure
and for $j=1,\dots,J$
\begin{itemize}
\item[\bf(i)] 
$\om_{j}$ are open, disjoint and simply connected subdomains of $\om$ having the \KP,
\item[\bf(ii)] 
$\gatj:=\interior_{\mathsf{rel}}(\ol{\om_{j}}\cap\gat)\neq\emptyset$, if $\gat\neq\emptyset$.
\end{itemize}
\end{defi}

Here, $\interior_{\mathsf{rel}}$ denotes the interior with respect to
the topology on $\ga$.

\begin{rem}
\mylabel{slicerem}
From a practical point of view, 
\ul{all} domains considered in applications are sliceable,
but it is unclear whether every Lipschitz pair $\pair$ is already sliceable.
\end{rem}

Now, we can introduce our general assumptions 
on the domain and its boundary parts.

\begin{defi}
\mylabel{domdef}
The pair $\pair$ is called `admissible', if
\begin{itemize}
\item the pair $\pair$ possesses the {\MCP}and the \MAP,
\item and the pair $\pair$ is sliceable.
\end{itemize}
\end{defi}

\begin{rem}
\mylabel{domdefrem}
In particular, the pair $\pair$ is admissible if
\begin{itemize}
\item $\om$ has a Lipschitz boundary $\ga$,
\item $\gat$ is a Lipschitz patch,
\item $\pair$ is sliceable.
\end{itemize}
\end{rem}

\section{Proofs}

Let the pair $\pair$ be \ul{admissible}.
On our way to prove our main result
we follow in close lines the arguments of
\cite[section 3]{neffpaulywitschgenkornrtsli}.
First we prove a non-standard version of Korn's first inequality Corollary \ref{korn},
which will be presented as Lemma \ref{genkornlem}.
Then, we prove our main result.
Although, all subsequent proofs are very similar to the ones 
given in \cite[Lemmas 8, 9, 12, Theorem 14]{neffpaulywitschgenkornrtsli},
we will repeat them here for the convenience of the reader.

\begin{lem}
\mylabel{constlem}
Let $\gat\neq\emptyset$ and $u\in\Hgom$ with $\grad u\in\Hcczgatom$.
Then, $u$ is constant on any connected component of $\gat$.
\end{lem}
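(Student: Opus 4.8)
The plan is to reduce the statement to a classical fact about vector fields with vanishing tangential trace by exploiting the condition $\grad u \in \Hcczgatom$. Since $\Hcczgatom \subset \Hccgatom$, the vector field $v := \grad u$ has generalized vanishing tangential trace on $\gat$, i.e.\ $\nu \times v|_{\gat} = 0$ in the weak sense encoded by membership in $\ol{\Ciqcgatom}$. The heuristic is that the tangential part of $\grad u$ along $\gat$ is nothing but the (relative) surface gradient of the trace $u|_{\gat}$, so $\nu \times \grad u|_{\gat} = 0$ forces $u|_{\gat}$ to be locally constant, hence constant on each connected component of $\gat$.

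First I would make this rigorous by testing. Take any test form $\phi \in \Ciqcgatom$ for the appropriate rank (vector proxy $q=1$), so that $\dist(\supp\phi,\gat)>0$ fails — rather, I want test fields that \emph{do} reach $\gat$. The cleaner route: for $\psi \in \Ci(\ol\om,\rN)$ with $\supp\psi$ meeting $\gat$ but $\dist(\supp\psi,\gan)>0$, integrate by parts in the identity $\int_\om \grad u \cdot \psi = -\int_\om u\,\div\psi + \int_\ga u\,(\nu\cdot\psi)$. Because $v=\grad u\in\Hccgatom$ and $\curl v = 0$, one shows that the boundary pairing $\int_\gat u\,(\nu\cdot\psi)$ only sees the normal component of $\psi$ on $\gat$, while the vanishing tangential trace $\nu\times v|_{\gat}=0$ kills any contribution that would detect variation of $u$ tangentially. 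Equivalently, and more in the spirit of this paper: localize near a point of $\gat$, use the {\MCP}/{\MAP} framework to flatten the boundary (the creased-domain charts), and observe that in local coordinates the equations $\curl v = 0$, $\nu\times v = 0$ on the flat piece say exactly that $v = \grad u$ with $u$ independent of the tangential variables on that patch, hence constant there. Covering a connected component of $\gat$ by such patches and chaining the local constants gives the claim.

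The cleanest self-contained argument, and the one I would actually write, uses a slicing/approximation device: approximate $v = \grad u$ in $\Hccgatom$ by smooth fields $v_k \in \Ciqcgatom$ with $\curl v_k \to 0$ in $\Lt$ — but $\curl v_k$ need not vanish, so instead I would work directly with the known characterization $\Hcczgatom = \ol{\na\,\Hgcgatom} \oplus \harmdi$-type decomposition from Lemma \ref{hodgehelmdeco} / Corollary \ref{helmdeco}: since $v = \grad u$ is both a gradient and in $\Hcczgatom$, Helmholtz orthogonality pins down $v \in \ol{\grad\Hgcgatom}\oplus(\text{Dirichlet fields})$; projecting, one finds $u - \tilde u$ is locally constant for some $\tilde u \in \Hgcgatom$, and $\tilde u$ has trace zero on $\gat$ by definition, so $u$ is locally constant on $\gat$ and therefore constant on each connected component.

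The main obstacle is the trace regularity bookkeeping: $u$ is only in $\Hgom$ without boundary conditions, so $u|_\ga$ lives in $H^{1/2}(\ga)$ only after one knows $u\in\Hoom$ — which we do, since $\Hgom=\Hoom$ — but $\grad u$ has no a priori trace, only the weak $\nu\times$ trace supplied by membership in $\Hccgatom$. Matching these two notions of trace on $\gat$ rigorously — i.e.\ showing that the abstract vanishing tangential trace of $\grad u$ really is the surface gradient of $u|_{\gat}$ — is the delicate point, and is exactly where the weakly Lipschitz structure of $\pair$ and the density results underlying the definition of $\Dqcgatom$ must be invoked. Once that identification is in place, "surface gradient zero on a connected set $\Rightarrow$ constant" is elementary.
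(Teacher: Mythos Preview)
Your Helmholtz-based argument --- the one you single out as the one you would actually write --- has a genuine gap. From the decomposition $\Hcczgatom = \grad\Hgcgatom \oplus \harmdi$ you correctly obtain $\grad u = \grad\tilde u + h$ with $\tilde u\in\Hgcgatom$ and $h\in\harmdi$, but the claim that ``$u-\tilde u$ is locally constant'' amounts to asserting $h=0$, which is false in general. Take $\om$ to be a planar annulus with $\gat=\ga$ and $u(x)=\log|x|$: then $\grad u = x/|x|^2$ is radial, hence has vanishing tangential trace on $\ga$, and is curl- and divergence-free, so $\grad u\in\harmdi$ itself. Here the projection onto $\grad\Hgcgatom$ vanishes, $\tilde u$ is constant, and $u-\tilde u$ is nowhere locally constant in $\om$. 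The lemma still holds for this $u$ --- it is constant on each boundary circle separately --- but your argument does not detect this. The Dirichlet-Neumann piece $h$ is precisely what carries the possibly different constants on different components of $\gat$; handling $h=\grad(u-\tilde u)\in\Hcczgatom$ returns you to the original hypothesis, so the route is circular.

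The paper's proof is purely local and sidesteps the global decomposition entirely, close in spirit to your second sketch but with a concrete device you do not supply. Fix $x\in\gat$ and a small ball $B_{2r}$ about $x$ with $\ga\cap B_{2r}\subset\gat$. Multiply $\grad u$ by a cut-off $\varphi\in\Cic(B_{2r})$ equal to $1$ on $B_r$; then $\varphi\grad u\in\Hcgen{}{\circ}{\om\cap B_{2r}}$, so its extension by zero to all of $B_{2r}$ lies in $\Hcgen{}{}{B_{2r}}$ and is curl-free on $B_r$. Since $B_r$ is simply connected, the extension equals $\grad\tilde u$ for some $\tilde u\in\Hggen{}{}{B_r}$; the extension vanishes on $B_r\setminus\ol\om$, so $\tilde u$ is constant there, and since $\grad\tilde u=\grad u$ on $B_r\cap\om$, the trace $u|_{B_r\cap\gat}$ is constant. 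A covering argument finishes. The key idea you are missing is this extension by zero \emph{across} $\gat$ into a simply connected ball, which manufactures a potential on both sides of the boundary and makes the trace identification trivial --- no flattening and no surface-gradient bookkeeping are needed.
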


\begin{proof}
Let $x\in\gat$ and $B_{2r}:=B_{2r}(x)$ be the open ball of radius $2r>0$
around $x$ such that $B_{2r}$ is covered by a Lipschitz-chart domain 
and $\ga\cap B_{2r}\subset\gat$.
Moreover, we pick a cut-off function $\varphi\in\Cic(B_{2r})$ with $\varphi|_{B_{r}}=1$.
Then, $\varphi\grad u\in\Hcgen{}{\circ}{\om\cap B_{2r}}$.
Thus, the extension by zero $v$ of $\varphi\grad u$ to $B_{2r}$
belongs to $\Hcgen{}{}{B_{2r}}$. 
Hence, $v|_{B_{r}}\in\Hcgen{0}{}{B_{r}}$.
Since $B_{r}$ is simply connected, 
there exists a $\tilde{u}\in\Hggen{}{}{B_{r}}$
with $\grad\tilde{u}=v$ in $B_{r}$. 
In $B_{r}\setminus\ol{\om}$ we have $v=0$.
Therefore, $\tilde{u}|_{B_{r}\setminus\ol{\om}}=\tilde{c}$ 
with some $\tilde{c}\in\rz$.
Moreover, $\grad u=v=\grad\tilde{u}$ holds in $B_{r}\cap\om$,
which yields $u=\tilde{u}+c$ in $B_{r}\cap\om$ with some $c\in\rz$.
Finally, $u|_{B_{r}\cap\gat}=\tilde{c}+c$ is constant.
Therefore, $u$ is locally constant and hence the assertion follows.
\end{proof}

\begin{lem}
\mylabel{kornlem}
{\sf(Korn's First Inequality: Tangential Version)}
Let $\gat\neq\emptyset$. Then, there exists a constant $\ckt\geq\cks$, such that
$$\normLtom{\Grad v}\leq\ckt\normLtom{\sym\Grad v}$$
holds for all $v\in\HGom$ with $\Grad v\in\HCczgatom$.
\end{lem}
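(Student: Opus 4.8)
The plan is to reduce Lemma \ref{kornlem} to the standard Korn inequality (Corollary \ref{korn}(i)) by showing that any $v \in \HGom$ with $\Grad v \in \HCczgatom$ can be modified, component by component, by a rigid motion so that the resulting field lies in $\HGcgatom$, without changing $\sym\Grad v$ — and then using that $\Grad$ is unaffected by such a modification up to a skew constant, which is absorbed. More precisely, write $v = (v_1,\dots,v_N)^{\top}$; the condition $\Grad v \in \HCczgatom$ means row-wise $\grad v_n \in \Hcczgatom$ for each $n$. By Lemma \ref{constlem}, each $v_n$ is constant on every connected component of $\gat$. The idea is that "$v_n$ has a constant tangential trace on $\gat$" is exactly the tangential Neumann-type condition that makes the non-standard Korn inequality applicable; one wants to subtract from $v_n$ an appropriate affine function so that the corrected vector field sits in $\HGcgatom$.

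First I would make the observation that subtracting from $v$ a rigid motion $r(x) = Ax + b$ with $A \in \soN$ changes $\Grad v$ to $\Grad v - A$ and leaves $\sym \Grad v = \sym(\Grad v - A)$ unchanged, since $\sym A = 0$. So it suffices to find such an $r$ (in fact, here we will only need constants, i.e. $A=0$, because the tangential trace being constant on each component is a scalar condition per row, not a full affine condition) with $v - r \in \HGcgatom$, and then apply Corollary \ref{korn}(i):
$$\normLtom{\Grad v} = \normLtom{\Grad(v-r) + A} \le \normLtom{\Grad(v-r)} + \normLtom{A} \le (\cks + c)\,\normLtom{\sym\Grad(v-r)} = \ckt\,\normLtom{\sym\Grad v},$$
where the middle bound controls $\normLtom{A}$ by $\normLtom{\Grad(v-r)}$ (finite-dimensionality of $\soN$ and equivalence of norms on $\om$) and then by $\normLtom{\sym\Grad(v-r)}$ via Korn again. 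Actually the cleanest route: take $r = 0$ if $\gat$ is connected (subtract from each $v_n$ its constant trace value $c_n$ on $\gat$, giving $v - b \in \HGcgatom$ with $b = (c_1,\dots,c_N)^{\top}$ constant), obtaining directly $\normLtom{\Grad v} = \normLtom{\Grad(v-b)} \le \cks \normLtom{\sym\Grad(v-b)} = \cks\normLtom{\sym\Grad v}$, so in that case $\ckt = \cks$ works.

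The subtle point — and the main obstacle — is the passage from "$v_n$ has constant trace on each connected component of $\gat$" to "$v_n$ minus a single constant lies in $\Hgcgatom$." If $\gat$ has several connected components, the trace values on different components may differ, and no single constant removes all of them. The fix is to handle this component by component of $\gat$: one can still not globally subtract a locally-constant function (that would spoil $v_n \in \Hgom$ in general — actually a locally constant function on $\gat$ need not extend to a function on $\om$ with square-integrable gradient in a controlled way, but it does extend, e.g. via a partition of unity to a smooth function $\chi_n$ on $\omq$ that equals the trace constant near each component, with $\grad \chi_n \in \Lt$; since $\chi_n$ is smooth and its trace on $\gat$ matches $v_n$, $v_n - \chi_n \in \Hgcgatom$). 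Then $\Grad v = \Grad(v - \chi) + \Grad \chi$ with $\chi = (\chi_1,\dots,\chi_N)^{\top} \in \HGom$ fixed, and one needs $\normLtom{\Grad \chi}$ controlled; but $\chi$ depends on $v$ only through finitely many trace constants, so by a compactness/finite-dimensionality argument (the map $v \mapsto (\text{trace constants})$ is bounded from $\{v \in \HGom : \Grad v \in \HCczgatom\}$ with the $\sym\Grad$-seminorm, modulo the kernel, which is finite-dimensional) one gets $\normLtom{\Grad \chi} \le c\,\normLtom{\sym\Grad v}$ by a standard indirect argument. Hence $\ckt = \cks + c \ge \cks$. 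I expect the indirect/compactness bookkeeping for the correction term $\chi$ — ensuring the constant in front of $\normLtom{\sym\Grad v}$ is uniform — to be the only place requiring care; everything else is a direct assembly of Lemma \ref{constlem} and Corollary \ref{korn}.
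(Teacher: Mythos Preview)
Your reduction for connected $\gat$ is exactly the paper's argument: subtract the constant trace vector $c_v$ so that $v-c_v\in\HGcgatom$ and apply Corollary~\ref{korn}(i). The divergence comes when $\gat$ has several components.

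The paper handles the multi-component case in one line by a trick you missed: it does \emph{not} try to kill the trace on all of $\gat$. Instead it fixes a single relatively open connected component $\tilde\ga\neq\emptyset$ of $\gat$, uses Lemma~\ref{constlem} to find one constant vector $c_v$ with $v-c_v\in\HGgen{}{\circ}{\tilde\ga,\om}$, and then applies Corollary~\ref{korn}(i) with $\gat$ replaced by $\tilde\ga$. Since Korn's first inequality only requires the boundary piece to be nonempty, this yields
$\normLtom{\Grad v}=\normLtom{\Grad(v-c_v)}\le\cks(\tilde\ga)\,\normLtom{\sym\Grad v}$
immediately, with $\ckt:=\cks(\tilde\ga)\ge\cks$. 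No partition of unity, no correction field $\chi$, no indirect argument.

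Your multi-component route---building a smooth $\chi$ matching the locally constant trace data and then controlling $\normLtom{\Grad\chi}$ by $\normLtom{\sym\Grad v}$ via compactness---is not obviously wrong, but the sketch has a real gap: the trace constants can be shifted by any element of the (finite-dimensional) kernel of $\sym\Grad$ on the constraint space, so $\Grad\chi$ is \emph{not} bounded by $\normLtom{\sym\Grad v}$ without first quotienting and then choosing a representative, which you do not carry out. Whether this can be completed depends on identifying that kernel and showing the quotient map is bounded; this is doable but is genuine extra work. The paper's one-component trick makes all of it unnecessary.
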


In classical terms, $\Grad v\in\HCczgatom$ 
means that $\grad v_{n}=\na v_{n}$, $n=1,\dots,N$, are normal at $\gat$.\\

\begin{proof}
We pick a relatively open connected component $\tilde{\ga}\neq\emptyset$ of $\gat$.
Then, there exists a constant vector $c_{v}\in\rt$ 
such that $v-c_{v}$ belongs to $\HGgen{}{\circ}{\tilde{\ga},\om}$
by Lemma \ref{constlem} applied to each component of $v$.
Corollary \ref{korn} (i) (with $\gat=\tilde{\ga}$ and a possibly larger $\ckt$) 
completes the proof.
\end{proof}

Now, we extend Korn's first inequality 
from gradient to merely irrotational tensor fields.

\begin{lem}
\mylabel{genkornlem}
{\sf(Korn's First Inequality: Irrotational Version)}
There exists $\ck\geq\ckt>0$, 
such that the following inequalities hold:
\begin{itemize}
\item[\bf(i)] If $\gat\neq\emptyset$, then for all tensor fields $\T\in\HCczgatom$
\begin{align}
\mylabel{estTsymT}
\normLtom{\T}\leq\ck\normLtom{\sym\T}.
\end{align}
\item[\bf(ii)] If $\gat=\emptyset$, then for all tensor fields $\T\in\HCzom$
there exists a piece-wise constant skew-symmetric tensor field $\Tskew$ such that
$$\normLtom{\T-\Tskew}\leq\ck\normLtom{\sym\T}.$$
\item[\bf(ii')] 
If $\gat=\emptyset$ and $\om$ is additionally simply connected, 
then (ii) holds with the uniquely determined 
constant skew-symmetric tensor field $\Tskew:=\Tskew_{\T}=\pi_{\soN}\T$
given by \eqref{defTpi}.
Moreover, $\T-\Tskew_{\T}\in\HCzom\cap\soN^{\bot}$ and
$\Tskew_{\T}=0$ if and only if $\T\bot\soN$. 
Thus, \eqref{estTsymT} holds for all $\T\in\HCzom\cap\soN^{\bot}$ as well.
\end{itemize}
\end{lem}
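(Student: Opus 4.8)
The plan is to reduce the irrotational case to the gradient case handled in Lemma \ref{kornlem} and Corollary \ref{korn}, using the sliceability of the pair $\pair$ to pass from locally-defined potentials to a globally well-behaved skew-symmetric correction. For part (i), let $\T\in\HCczgatom$. Using the sliceable decomposition $\om=\om_1\cup\dots\cup\om_J$ (up to a null set) with each $\om_j$ simply connected and having the \KP, and $\gatj\neq\emptyset$, I would argue that on each simply connected piece $\om_j$ the rows of $\T$ are irrotational, hence gradients: there is $v^{(j)}\in\HGgen{}{}{\om_j}$ with $\Grad v^{(j)}=\T|_{\om_j}$. The tangential vanishing of $\T$ on $\gat$ transfers to $\Grad v^{(j)}\in\HCczgatj{}{}{\om_j}$ (the rows of $v^{(j)}$ are normal at $\gatj$), so Lemma \ref{kornlem} applied on $\om_j$ gives $\normLtgen{}{\om_j}{\T}\leq\ckt\normLtgen{}{\om_j}{\sym\T}$ with a constant depending on $\om_j$; taking $\ck$ to be the maximum over the finitely many pieces and summing the squared estimates yields \eqref{estTsymT} on all of $\om$.

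For part (ii), with $\gat=\emptyset$, I would again invoke sliceability and on each simply connected $\om_j$ write $\T|_{\om_j}=\Grad v^{(j)}$. Now there is no boundary condition, so I use Corollary \ref{korn}(ii), specifically \eqref{firstkornstandardempty}, on each $\om_j$: there is a constant skew-symmetric tensor $\Tskew^{(j)}:=\skew\oint_{\om_j}\Grad v^{(j)}\dl\in\soN$ with $\normLtgen{}{\om_j}{\T-\Tskew^{(j)}}\leq\cks\normLtgen{}{\om_j}{\sym\T}$. Defining $\Tskew$ to be the piece-wise constant field equal to $\Tskew^{(j)}$ on $\om_j$, and summing squares over $j$, gives $\normLtom{\T-\Tskew}\leq\ck\normLtom{\sym\T}$ with $\ck$ the max of the per-piece constants. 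This $\Tskew$ is in general genuinely piece-wise constant and need not be globally constant, hence need not lie in $\HCom$, which is exactly what the statement asserts.

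For part (ii'), if in addition $\om$ itself is simply connected, then $\T=\Grad v$ globally for a single $v\in\HGom$ (no slicing needed), and Corollary \ref{korn}(ii) via \eqref{firstkornstandardempty} directly gives the estimate with $\Tskew_{\T}=\skew\oint_{\om}\Grad v\dl=\skew\oint_{\om}\T\dl=\pi_{\soN}\T$, the genuine $\Ltom$-orthogonal projection onto $\soN$. Since $\pi_{\soN}$ is an orthogonal projection, $\T-\Tskew_{\T}=(\id-\pi_{\soN})\T\bot\soN$, it still has vanishing $\Curl$ (constants are curl-free) so it lies in $\HCzom\cap\soN^{\bot}$, and $\Tskew_{\T}=0$ exactly when $\pi_{\soN}\T=0$, i.e. $\T\bot\soN$; applying the estimate to such $\T$ recovers \eqref{estTsymT} on $\HCzom\cap\soN^{\bot}$. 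Uniqueness of $\Tskew_{\T}$ is clear because any constant skew tensor making $\T-\Tskew$ perpendicular to $\soN$ must equal $\pi_{\soN}\T$.

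The main obstacle is the first reduction step in parts (i) and (ii): justifying rigorously that an $\Ltom$ tensor field with vanishing distributional $\Curl$ on a simply connected Lipschitz domain is a gradient of an $\HGgen{}{}{}$ field (the row-wise potential lemma in low regularity), and — more delicately — that the row-wise tangential trace condition $\T\in\HCczgatom$ really does translate into the correct normal boundary condition $\Grad v^{(j)}\in\HCczgatj{}{}{\om_j}$ on each piece, so that Lemma \ref{kornlem} is applicable. This is the analogue of Lemma \ref{constlem}: a cut-off/localization argument near $\gatj$ produces a local potential that is constant on the outside of $\om$, pinning the boundary values and giving the normality of $\grad v_n^{(j)}$ at $\gatj$. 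Everything after that is bookkeeping over the finitely many sliceable pieces and summing squared norms.
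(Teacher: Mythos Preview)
Your proposal is correct and matches the paper's approach: slice, find potentials on each simply connected piece, apply Lemma \ref{kornlem} (for (i)) or Corollary \ref{korn}(ii) (for (ii), (ii')), and take the maximum of the per-piece constants. For the boundary-condition transfer in (i) the paper uses a simpler mechanism than the localization you suggest: since $\HCczgatom$ is by definition the closure of smooth fields supported away from $\gat$, an approximating sequence $(\T^\ell)$ for $\T$ restricts to one for $\T|_{\om_j}$ supported away from $\gatj$, giving $\T|_{\om_j}\in\HCgen{0}{\circ}{\gatj,\om_j}$ directly and making Lemma \ref{kornlem} applicable without a separate cut-off argument.
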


Again we note that in classical terms 
a tensor $\T\in\HCczgatom$ is irrotational 
and the vector field $\T\tau|_{\gat}$ vanishes 
for all tangential vector fields $\tau$ at $\ga$.
Moreover, the sliceability of $\pair$ is precisely
needed for Lemma \ref{genkornlem} to hold.\\ 

\begin{proof}
We start with proving (i).
Let $\gat\neq\emptyset$ and $\T\in\HCczgatom$. 
We choose a sequence $(\T^{\ell})\subset\Cic(\gat;\om)$ 
converging to $\T$ in $\HCom$.
According to Definition \ref{domdefsli} 
we decompose $\om$ into $\om_{1}\cup\ldots\cup\om_{J}$
and pick some $1\leq j\leq J$.
Then, the restriction $\T_{j}:=\T|_{\om_{j}}$ 
belongs to $\HCgen{0}{}{\om_{j}}$
and $(\T^{\ell}|_{\ol{\om_{j}}})\subset\Cic(\gatj;\om)$ 
converges to $\T_{j}$ in $\HCgen{}{}{\om_{j}}$.
Thus, $\T_{j}\in\HCgen{0}{\circ}{\gatj,\om_{j}}$.
Since $\om_{j}$ is simply connected,
there exists a potential vector field $v_{j}\in\HGgen{}{}{\om_{j}}$
with $\Grad v_{j}=\T_{j}$
and Lemma \ref{kornlem} yields
\begin{align*}
\norm{\T_{j}}_{\Lt(\om_j)}
&\leq\cktj\norm{\sym\T_{j}}_{\Lt(\om_j)},&
\cktj&\,>0.
\intertext{This can be done for each $j$. Summing up, we obtain}
\normLtom{\T}
&\leq\ck\normLtom{\sym\T},&
\ck&:=\max_{j=1,\dots,J}\cktj,
\end{align*}
proving (i). 
Now, we assume $\gat=\emptyset$.
To show (ii),
let $\T\in\HCzom$ and, as before, let $\om$ be decomposed into
$\om_{1}\cup\ldots\cup\om_{J}$ by Definition \ref{domdefsli}.
Again, since every $\om_{j}$ is simply connected
and $\T_{j}\in\HCgen{0}{}{\om_{j}}$,
there exist vector fields $v_{j}\in\HGgen{}{}{\om_{j}}$ 
with $\Grad v_{j}=:\T_{j}=\T$ in $\om_{j}$.
By Korn's first inequality, Corollary \eqref{korn} (ii), 
there exist positive $\cksj$ and $\Tskew_{\T_{j}}\in\soN$ with
$$\norm{\T_{j}-\Tskew_{\T_{j}}}_{\Lt(\om_{j})}
\leq\cksj\norm{\sym\T_{j}}_{\Lt(\om_{j})},\quad
\Tskew_{\T_{j}}=\skew\oint_{\om_{j}}\T_{j}\dl=\skew\oint_{\om_{j}}\T\dl.$$
We define the piece-wise constant skew-symmetric tensor field $\Tskew$ a.e. by
$\Tskew|_{\om_{j}}:=\Tskew_{\T_{j}}$ and set $\ds\ck:=\max_{j=1,\ldots,J}\cksj$.
Summing up, gives (ii). 
We have also proved the first assertion of (ii'),
since we do not have to slice if $\om$ is simply connected.
The remaining assertion of (ii')
are trivial, since $\pi_{\soN}:\Ltom\to\soN$ 
is a $\Ltom$-orthogonal projector.
We note that this can be seen also by direct calculations:
To show that $\T-\Tskew_{\T}$ belongs to $\HCzom\cap\soN^{\bot}$ 
we note $\Tskew_{\T}\in\HCzom$ and compute for all $\Tskew\in\soN$
\begin{align*}
\scpLtom{\Tskew_{\T}}{\Tskew}
&=\scps{\int_{\om}\T\dl}{\Tskew}_{\rNtN}
=\int_{\om}\scp{\T}{\Tskew}_{\rNtN}\dl
=\scpLtom{\T}{\Tskew}.
\end{align*}
Hence, $\Tskew_{\T}=0$ implies $\T\bot\soN$.
On the other hand, setting $\Tskew:=\Tskew_{\T}$ shows that
$\T\bot\soN$ also implies $\Tskew_{\T}=0$.
\end{proof}

We are ready to prove our main theorem.

\begin{proofof}{Theorem \ref{maintheo}}
Let $\gat\neq\emptyset$ and $\T\in\HCcgatom$.
By Corollary \ref{helmdecoten} we have
$$\T=\TR+\TS\in\HCczgatom\oplus\big(\HDczganom\cap(\harmdi^{N})^{\bot}\big).$$
Moreover, by Corollary \ref{poincaremaxten} we obtain
\begin{align}
\mylabel{estpsi}
\normLtom{\TS}
&\,\leq\cm\normLtom{\Curl\T}
\intertext{since $\Curl\TS=\Curl\T$ and 
$\TS\in\HCcgatom\cap\HDczganom\cap(\harmdi^N)^{\bot}$.
Then, by orthogonality, Lemma \ref{genkornlem} (i) for $\TR$ and \eqref{estpsi}}
\normLtom{\T}^2=\normLtom{\TR}^2+\normLtom{\TS}^2
&\,\leq\ck^2\normLtom{\sym\TR}^2+\normLtom{\TS}^2\nonumber\\
&\,\leq2\ck^2\normLtom{\sym\T}^2+(1+2\ck^2)\normLtom{\TS}^2\nonumber\\
&\,\leq\cone^2\big(\normLtom{\sym\T}^2+\normLtom{\Curl\T}^2\big)\nonumber
\intertext{with}
\mylabel{explicitconst}
\cone&:=\max\{\sqrt{2}\ck,\cm\sqrt{1+2\ck^2}\}
\end{align}
follows, which proves (i).
Now, let $\gat=\emptyset$ and $\T\in\HCom$.
First, we show (ii').
We follow in close lines the first part of the proof.
For the convenience of the reader, we repeat the previous 
arguments in this special case. 
According to Corollary \ref{helmdecoten} we orthogonally decompose 
$$\T=\TR+\TS\in\HCzom\oplus\big(\HDczom\cap(\harmdi^N)^{\bot}\big).$$
Then, $\Curl\TS=\Curl\T$ and
$\TS\in\HCom\cap\HDczom\cap(\harmdi^N)^{\bot}$.
Again, by Corollary \ref{poincaremaxten} we have \eqref{estpsi}.
Note that
$$\Tskew_{\TR}=\pi_{\soN}\TR=\skew\oint_{\om}\TR\dl\in\soN\subset\HCzom.$$
As before, by orthogonality, Lemma \ref{genkornlem} (ii') applied to $\TR$ and \eqref{estpsi}
\begin{align*}
\normLtom{\T-\Tskew_{\TR}}^2
=\normLtom{\TR-\Tskew_{\TR}}^2+\normLtom{\TS}^2
&\leq\ck^2\normLtom{\sym\TR}^2+\normLtom{\TS}^2\\
&\leq2\ck^2\normLtom{\sym\T}^2+(1+2\ck^2)\normLtom{\TS}^2\\
&\leq\cone^2\big(\normLtom{\sym\T}^2+\normLtom{\Curl\T}^2\big).
\end{align*}
For $\TS=\Curl^{*}X$ with 
$X\in\Hgen{}{}{\circ}(\Curl^{*};\om)=\HDczom\cap(\harmdi^N)^{\bot}$,
where $\Curl^{*}\cong-\delta_{2}$ denotes the formal adjoint 
of $\Curl\cong\ed_{1}$, and all $\Tskew\in\soN$ we have
$$\scpLtom{\Tskew_{\TS}}{\Tskew}
=\scps{\int_{\om}\TS\dl}{\Tskew}_{\rNtN}
=\scpLtom{\Curl^{*}X}{\Tskew}
=\scpLtom{X}{\Curl\Tskew}=0,$$
which shows $\Tskew_{\TS}=0$ by setting $\Tskew:=\Tskew_{\TS}$.
Hence $\Tskew_{\T}=\Tskew_{\TR}$.
The proof of (ii') is complete, since all other remaining assertions are trivial.
Finally, to show (ii), we follow the proof of (ii') 
up to the point, where $\Tskew_{\TR}$ was introduced.
Now, by Lemma \ref{genkornlem} (ii) for $\TR$ we get a piece-wise constant 
skew-symmetric tensor $\Tskew:=\Tskew_{\TR}$.
We note that in general $\Tskew$ does not belong to $\HCom$ anymore.
Hence, we loose the $\Ltom$-orthogonality $\TR-\Tskew\,\bot\,\TS$.
But again, by Lemma \ref{genkornlem} (ii) and \eqref{estpsi}
\begin{align}
\normLtom{\T-\Tskew}
&\leq\normLtom{\TR-\Tskew}+\normLtom{\TS}
\leq\ck\normLtom{\sym\TR}+\normLtom{\TS}\nonumber\\
&\leq\ck\normLtom{\sym\T}+(1+\ck)\normLtom{\TS}\nonumber\\
&\leq\ck\normLtom{\sym\T}+(1+\ck)\cm\normLtom{\Curl\T}\nonumber\\
&\leq\ctwo\big(\normLtom{\sym\T}^2+\normLtom{\Curl\T}^2\big)^{1/2}\nonumber
\intertext{with}
\mylabel{explicitconsttwo}
\ctwo&:=\sqrt{2}\max\{\ck,\cm(1+\ck)\},
\end{align}
which proves (ii).
\end{proofof}

\section{One Additional Result}

As in \cite[sections 3.4]{neffpaulywitschgenkornrtsli} 
we can prove a generalization for media with structural changes.
To apply the main result from \cite{pompekorn},
let $\mu\in\Czomb$ be a $(N\times N)$-matrix field
satisfying $\det\mu\geq\hat{\mu}>0$. 

\begin{cor}
\mylabel{corothermedia}
Let $\gat\neq\emptyset$ and let the pair $\pair$ be admissible.
Then there exists $c>0$ such that
$$c\normLtom{\T}\leq\normLtom{\sym(\mu\T)}+\normLtom{\Curl\T}$$
holds for all tensor fields $\T\in\HCcgatom$. 
In other words, on $\HCcgatom$ the right hand side 
defines a norm equivalent to the standard norm in $\HCom$.
\end{cor}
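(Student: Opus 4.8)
The plan is to reduce Corollary \ref{corothermedia} to the already-proved Theorem \ref{maintheo}(i) by absorbing the matrix field $\mu$ into the estimate via an indirect (contradiction) argument, exactly as one passes from Korn's inequality for $\sym\Grad v$ to the $\mu$-weighted version in \cite{pompekorn}. First I would observe that the right-hand side $\normLtom{\sym(\mu\T)}+\normLtom{\Curl\T}$ is a continuous seminorm on $\HCcgatom$, and that by Theorem \ref{maintheo}(i) the standard norm on $\HCcgatom$ is equivalent to $\normLtom{\sym\T}+\normLtom{\Curl\T}$; hence it suffices to prove
$$c\,\normLtom{\T}\leq\normLtom{\sym(\mu\T)}+\normLtom{\Curl\T}\qquad\text{for all }\T\in\HCcgatom.$$
Suppose this fails. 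Then there is a sequence $(\T^{\ell})\subset\HCcgatom$ with $\normLtom{\T^{\ell}}=1$ and $\normLtom{\sym(\mu\T^{\ell})}+\normLtom{\Curl\T^{\ell}}\to 0$. In particular $\Curl\T^{\ell}\to 0$ in $\Ltom$ and $(\T^{\ell})$ is bounded in $\HCcgatom$; by the Maxwell-type compactness underlying Theorem \ref{maintheo} (the \MCP), after passing to a subsequence $\T^{\ell}\to\T$ in $\Ltom$ with $\normLtom{\T}=1$, $\Curl\T=0$, and $\T\in\HCczgatom$. Moreover $\sym(\mu\T^{\ell})\to 0$ in $\Ltom$, and since $\mu\in\Czomb$ the multiplication by $\mu$ is continuous on $\Ltom$, so $\sym(\mu\T)=0$ pointwise a.e.

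Now I would invoke the pointwise algebraic-analytic ingredient from \cite{pompekorn}: under the hypothesis $\mu\in\Czomb$ with $\det\mu\geq\hat{\mu}>0$, any $\T\in\HCczgatom$ with $\sym(\mu\T)=0$ must vanish. The mechanism is that $\sym(\mu\T)=0$ forces $\mu\T$ to be (pointwise) skew-symmetric; writing $\T=\mu^{-1}S$ with $S$ skew, the constraint that $\Curl\T=0$ and the tangential boundary condition on $\gat$ turn this into an overdetermined first-order system whose only solution is $\T\equiv 0$ — this is precisely the content of the rigidity result Pompe proves (a ``Korn inequality for non-constant coefficients''). Concretely, on each simply connected slice $\om_j$ (using sliceability from Definition \ref{domdefsli}), $\T_j=\Grad v_j$ for some $v_j$, and $\sym(\mu\Grad v_j)=0$ together with the tangential condition on $\gatj\neq\emptyset$ yields $v_j$ constant by Pompe's result, hence $\T_j=0$; summing over $j$ gives $\T=0$, contradicting $\normLtom{\T}=1$.

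The main obstacle is the last step — establishing the pointwise/rigidity statement that $\sym(\mu\T)=0$ together with $\T\in\HCczgatom$ implies $\T=0$. This is not elementary: it is exactly where the regularity $\mu\in\Czomb$ and the nondegeneracy $\det\mu\geq\hat{\mu}>0$ enter, and it is the reason the corollary cites \cite{pompekorn} rather than being a one-line consequence of Theorem \ref{maintheo}. Once that rigidity is granted, the compactness/contradiction scheme closes routinely: boundedness in $\HCcgatom$ plus $\Curl\T^\ell\to 0$ gives an $\Ltom$-convergent subsequence by the \MCP, the limit lies in $\HCczgatom$ with $\sym(\mu\T)=0$, hence is $0$, contradicting unit norm. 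I would also remark, as the paper does for Theorem \ref{maintheo}, that the resulting norm equivalence on $\HCcgatom$ follows immediately since the reverse inequality $\normLtom{\sym(\mu\T)}+\normLtom{\Curl\T}\leq c'\,\norm{\T}_{\HCom}$ is trivial from $\mu\in\Czomb$.
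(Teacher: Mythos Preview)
Your contradiction scheme has a genuine gap at the compactness step. You claim that boundedness of $(\T^{\ell})$ in $\HCcgatom$ together with $\Curl\T^{\ell}\to 0$ yields, via the {\MCP}, a strongly $\Ltom$-convergent subsequence. This is false: the {\MCP} furnishes compactness of $\HCcgatom\cap\HDcganom\hookrightarrow\Ltom$, \emph{not} of $\HCcgatom\hookrightarrow\Ltom$. Without $\Div$-control and the normal boundary condition on $\gan$ the latter embedding is non-compact; indeed $\Grad\HGcgatom\subset\HCczgatom$ is an infinite-dimensional closed subspace on which the $\HCom$-norm and the $\Ltom$-norm coincide. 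Hence you cannot extract a strongly convergent subsequence, and the limit $\T$ with $\normLtom{\T}=1$ is never produced. (Weak convergence alone does not preserve the norm, so the contradiction does not close.)

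The route the paper has in mind (it defers to \cite[section 3.4]{neffpaulywitschgenkornrtsli} and \cite{pompekorn}) is direct rather than indirect: first establish a $\mu$-weighted analogue of Lemma \ref{genkornlem}(i), namely $\normLtom{\T}\leq c_{\mu}\normLtom{\sym(\mu\T)}$ for all $\T\in\HCczgatom$, by slicing $\om$ into the simply connected pieces $\om_{j}$, writing $\T|_{\om_{j}}=\Grad v_{j}$, and applying Pompe's \emph{quantitative} Korn inequality with non-constant coefficients on each $\om_{j}$ (using the tangential condition on $\gatj\neq\emptyset$). Then repeat verbatim the Helmholtz-plus-Maxwell proof of Theorem \ref{maintheo}(i), replacing Lemma \ref{genkornlem}(i) by this $\mu$-version. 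Your argument can be repaired along these lines: after the Helmholtz split $\T^{\ell}=\TR^{\ell}+\TS^{\ell}$, the Maxwell estimate already gives $\TS^{\ell}\to 0$ in $\Ltom$, and Pompe's quantitative inequality on the potentials of $\TR^{\ell}$ on each slice forces $\TR^{\ell}\to 0$ directly --- no compactness of $\HCcgatom$ is needed at all.
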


\appendix

\section{Construction of Hodge-Helmholtz Projections}

We want to point out how to compute the projections
in the Hodge-Helmholtz decompositions in Lemma \ref{hodgehelmdeco}.
Recalling from Lemma \ref{hodgehelmdeco} the orthogonal decompositions
\begin{align*}
\Ltqom
&=\ed\Dqmocgatom\oplus\Deqczganom\\
&=\Dqczgatom\oplus\cd\Deqpocganom\\
&=\ed\Dqmocgatom\oplus\harmdiq\oplus\cd\Deqpocganom
\end{align*}
we denote the corresponding $\Ltqom$-orthogonal projections by
$\pi_{\ed}$, $\pi_{\cd}$ and $\pi_{\dirichlet}$.
Then, we have $\pi_{\dirichlet}=\id-\pi_{\ed}-\pi_{\cd}$ and
\begin{align*}
\pi_{\ed}\Ltqom
&=\ed\Dqmocgatom
=\ed\xqmoom,&
\xqmoom&:=\Dqmocgatom\cap\cd\Deqcganom,\\
\pi_{\cd}\Ltqom
&=\cd\Deqpocganom
=\cd\yqpoom,&
\yqpoom&:=\Deqpocganom\cap\ed\Dqcgatom,\\
\pi_{\dirichlet}\Ltqom&=\harmdiq.
\end{align*}
By Poincar\'e's estimate, i.e., Lemma \ref{poincarediff},
we have
\begin{align}
\forall\,E&\in\xqmoom&
\normLtqmoom{E}&\leq\cpqmo\normLtqom{\ed E},\mylabel{poincarespecd}\\
\forall\,H&\in\yqpoom&
\normLtqpoom{H}&\leq\cpqpo\normLtqom{\cd H}.\mylabel{poincarespecde}
\end{align}
Hence, the bilinear forms
\begin{align*}
(\tilde{E},E)&\mapsto\scpLtqom{\ed\tilde{E}}{\ed E},&
(\tilde{H},H)&\mapsto\scpLtqom{\cd\tilde{H}}{\cd H}
\intertext{are continuous and coercive over 
$\xqmoom$ and $\yqpoom$, respectively.
Moreover, for any $F\in\Ltqom$ the linear functionals}
E&\mapsto\scpLtqom{F}{\ed E},&
H&\mapsto\scpLtqom{F}{\cd H}
\end{align*}
are continuous over $\xqmoom$ respectively $\yqpoom$.
Thus, by Lax-Milgram's theorem we get
unique solutions $E_{\ed}\in\xqmoom$ and $H_{\cd}\in\yqpoom$ 
of the two variational problems
\begin{align}
\scpLtqom{\ed E_{\ed}}{\ed E}&=\scpLtqom{F}{\ed E}&
&\forall\,E\in\xqmoom,\mylabel{laxmilgramone}\\
\scpLtqom{\cd H_{\cd}}{\cd H}&=\scpLtqom{F}{\cd H}&
&\forall\,H\in\yqpoom\mylabel{laxmilgramtwo}
\end{align}
and the corresponding solution operators,
mapping $F$ to $E_{\ed}$ and $H_{\cd}$, respectively,
are continuous. In fact, we have as usual
$$\normLtqom{\ed E_{\ed}}\leq\normLtqom{F},\quad
\normLtqom{\cd H_{\cd}}\leq\normLtqom{F},$$
respectively, and therefore together 
with \eqref{poincarespecd} and \eqref{poincarespecde}
\begin{align*}
\norm{E_{\ed}}_{\xqmoom}&=\norm{E_{\ed}}_{\Dqmoom}
\leq\sqrt{1+\cpqmo^2}\normLtqom{F},\\
\norm{H_{\cd}}_{\yqpoom}&=\norm{H_{\cd}}_{\Deqpoom}
\leq\sqrt{1+\cpqpo^2}\normLtqom{F}.
\end{align*}
Since $\ed\Dqmocgatom=\ed\xqmoom$
and $\cd\Deqpocganom=\cd\yqpoom$
we see that \eqref{laxmilgramone} and \eqref{laxmilgramtwo}
hold also for $E\in\Dqmocgatom$ and
$H\in\Deqpocganom$, respectively,
and that
\begin{align*}
F-\ed E_{\ed}&\in\big(\ed\xqmoom\big)^{\bot}
=\big(\ed\Dqmocgatom\big)^{\bot}=\Deqczganom,\\
F-\cd H_{\cd}&\in\big(\cd\yqpoom\big)^{\bot}
=\big(\cd\Deqpocganom\big)^{\bot}=\Dqczgatom.
\end{align*}
Hence, we have found our projections since
\begin{align*}
\pi_{\ed}F&:=\ed E_{\ed}\in\ed\xqmoom\subset\Dqczgatom,\\
\pi_{\cd}F&:=\cd H_{\cd}\in\cd\yqpoom\subset\Deqczganom
\end{align*}
and
$$\pi_{\dirichlet}F:=F-\ed E_{\ed}-\cd H_{\cd}
\in\Dqczgatom\cap\Deqczganom=\harmdiq.$$

Explicit formulas for the dimensions of $\harmdiq$
or explicit constructions of bases of $\harmdiq$ 
depending on the topology of the pair $\pair$ can be found, e.g.,
in \cite{picardboundaryelectro} for the case $\gat=\ga$ or $\gat=\emptyset$,
or in \cite{goldshteinmitreairinamariushodgedecomixedbc} for the general case.

\begin{acknow}
We heartily thank Kostas Pamfilos for the beautiful pictures of 3D sliceable domains.
\end{acknow}

\bibliographystyle{plain} 
\bibliography{/Users/paule/Library/texmf/tex/TeXinput/bibtex/paule}

\end{document}